\documentclass{amsart}
\usepackage[utf8]{inputenc}

\usepackage{enumerate, amsmath, amsfonts, amssymb, amsthm, wasysym, xcolor, url, hyperref, hypcap, frcursive, longtable, caption}
\hypersetup{colorlinks=true, citecolor=blue, linkcolor=blue}

\usepackage{tikz}
\usetikzlibrary{scopes,positioning}
\tikzset{edot/.style={circle,draw=black,fill=black,minimum size=5pt,inner sep=0}}
\tikzset{ndot/.style={circle,draw=black,fill=white,minimum size=12pt,inner sep=1}}
\tikzset{cdot/.style={circle,draw=black,fill=black,minimum size=1pt,inner sep=0}}
\usetikzlibrary{decorations.pathmorphing}

\newtheorem{propo}{Proposition}[section]

\newtheorem{theor}[propo]{Theorem}

\newtheorem{conje}[propo]{Conjecture}
\newtheorem{algor}[propo]{Algorithm}

\theoremstyle{definition}

\newtheorem{property}{Property}

\theoremstyle{remark}
\newtheorem{remar}[propo]{Remark}

\numberwithin{equation}{section}

\newcommand{\algo}[6]
% #1 Name, #2 Parameters, #3 Comment, #4 Input, #5 Output, #6 Code
{%\vspace{11pt}% \addtocounter{propo}{1}
\begin{algor}{{\tt #1}{\rm (#2)}}\label{#1}\end{algor}
\vspace{-6pt}\noindent{\it #3}.

{\bf Input:} #4

{\bf Output:} #5

\newcounter{#1}
\begin{list}{\textbf{\arabic{#1}.}}{\usecounter{#1}}
#6\end{list}\vspace{3pt}}

\newcommand{\NN }{\mathbb{N}}
\newcommand{\CC }{\mathbb{C}}
\newcommand{\RR }{\mathbb{R}}
\newcommand{\FF }{\mathbb{F}}
\newcommand{\QQ }{\mathbb{Q}}
\newcommand{\ZZ }{\mathbb{Z}}

\DeclareMathOperator{\rank}{rk}

\newcommand{\degx}{\operatorname{d}_\x}
\newcommand{\degy}{\operatorname{d}_\y}

\newcommand{\PhD}{\Phi^+_{D_6}}
\newcommand{\PhH}{\Phi^+_{H_3}}

\newcommand{\defn}[1]{\emph{\color{red} #1}} % emphasis of a definition
\newcommand{\fref}[1]{Figure~\ref{#1}} % reference figures
\newcommand{\tref}[1]{Table~\ref{#1}} % reference tables

 % the symmetric group
\def\rootSystem{\Phi} % the root system for a finite Weyl group
\def\rootPoset{\Phi^+} % the root poset for a finite Weyl group
\def\posrootPoset{\Phi^+_{\sf res}} % the restricted root poset for a finite Weyl group
\def\simpleSystem{\Delta} % the simple system for a root system
 % the positive chamber
\def\orbits{\mathcal{O}} % the orbit structure of antichains in the root poset
\def\orbit{{\operatorname{O}}} % the orbit structure of antichains in the root poset
\def\bijection{{\Psi}} % the bijection between antichains and noncrossing partitions
\def\antichains{\mathcal{A}} % the collection of antichain in the root poset
 % the collection of positive antichain in the root poset
\def\orderideals{\mathcal{I}} % the collection of order ideals in the root poset
 % the collection of order ideals in the restricted root poset
\def\Cat{\operatorname{Cat}} % Catalan numbers
\def\Pan{{\sf P}} % Panyushev map
\def\posPan{{\sf P}_{\sf res}} % restricted Panyushev map
\def\Krew{{\sf K}} % Kreweras map
\def\NC{N\!C} % the noncrossing partition lattice
 % the cluster complex
\def\one{{\bf 1}} % the identity in W
\def\coinv{\mathcal{M}} % the diagonal coinvariant ring
\def\x{{\bf x}} % a set of variables
\def\y{{\bf y}} % another set of variables
\def\hilbert{{\mathcal{H}}} % the Hilbert series of a module

\newcommand{\multiset}[1]{\left\{\!\!\left\{ #1 \right\}\!\!\right\}} % a multiset

\title[On root posets for noncrystallographic root systems]{On root posets for\\noncrystallographic root systems}

\author{Michael~Cuntz}
\thanks{Most of the results of this article were achieved at the Leibniz Universit\"at Hannover in summer 2012.}
\address{Fachbereich Mathematik, Universit\"at Kaiserslautern}
\email{cuntz@mathematik.uni-kl.de}
\urladdr{http://www.mathematik.uni-kl.de/~cuntz}

\author{Christian~Stump}
\address{Institut f\"ur Algebra, Zahlentheorie, Diskrete Mathematik, Universit\"at Hannover}
\email{stump@math.uni-hannover.de}
\urladdr{http://homepage.univie.ac.at/christian.stump/}

\thanks{}

\begin{document}

\begin{abstract}
We discuss properties of root posets for finite crystallographic root systems, and show that these properties uniquely determine root posets for the noncrystallographic dihedral types and type $H_3$, while proving that there does not exist a poset satisfying all of the properties in type $H_4$. We do this by exhaustive computer searches for posets having these properties. We further give a realization of the poset of type $H_3$ as restricted roots of type $D_6$, and conjecture a Hilbert polynomial for the $q,t$-Catalan numbers for type $H_4$.
% \subclass{20F55}
% \keywords{Coxeter group \and H4 \and root poset}
% Combinatorics (math.CO), 
\end{abstract}

\maketitle

\tableofcontents

\section{Introduction}

Let $\rootSystem$ be a finite root system with simple roots $\simpleSystem$ and positive roots $\rootPoset$.
For crystallographic root systems, the \defn{root poset} is given by the partial order on positive roots defined as
$$\beta \leq \beta' \quad\text{if}\quad \beta' - \beta \in \NN\Delta.$$
Multiple enumerative properties of this poset and in particular about the collection of antichains in this poset can be deduced from the degrees of $\rootSystem$, which are defined to be the degrees of the fundamental invariants for the Weyl group $W(\rootSystem)$ associated to $\rootSystem$. We collect properties of root posets below in Properties~\ref{prop:parabolicsubsystems} through~\ref{prop:antichainorderidealsize}. Even though all the enumerative information can be studied as well for noncrystallographic root systems, the analogous posets in these cases with $\NN$ in the definition being replaced by $\RR_{\ge 0}$ do not have the desired properties.

This led D.~Armstrong to raise the question whether there exist posets for the noncrystallographic types having these properties. He moreover positively answered the question for the noncrystallographic dihedral types and for type $H_3$, see~\cite[Section~5.4.1]{Arm2006}. Even though he did not take Properties~\ref{prop:antichainorbitsize} and~\ref{prop:antichainorderidealsize} into account, the posets he constructed do indeed fulfill also those.

This leaves type $H_4$ as the only type remaining.
The main purpose of this paper is to discuss this type in full generality.
\begin{theor}
\label{th:maintheorem}
  Possible root posets for noncrystallographic types are given as follows:
  \begin{enumerate}[(i)]
    \item The posets in \fref{fig_posetH3} on page~\pageref{fig_posetH3} are the unique posets in types $I_2(m)$ and $H_3$ satisfying Properties~\ref{prop:parabolicsubsystems} through~\ref{prop:antichainorderidealsize}.
    \label{item1}

    \item There are billions of posets in type $H_4$ satisfying Properties~\ref{prop:parabolicsubsystems} through~\ref{prop:antichainsize}.
    \label{item2}
    
    \item None of these posets in type $H_4$ satisfy Property~\ref{prop:antichainorbitsize}.
    \label{item3}

    \item None of these posets in type $H_4$ satisfy Property~\ref{prop:antichainorderidealsize}.
    \label{item4}
  \end{enumerate}
\end{theor}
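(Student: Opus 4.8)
The plan is to reduce each of the four assertions to a finite --- though very large --- enumeration of posets, and to settle it by a carefully pruned exhaustive search. Fix a noncrystallographic type with Coxeter group $W$, and let $N$ be its number of positive roots, read off from the degrees: $N=m$ for $I_2(m)$, $N=15$ for $H_3$, $N=60$ for $H_4$. Each of Properties~\ref{prop:parabolicsubsystems} through~\ref{prop:antichainorderidealsize} translates into an intrinsic combinatorial condition on a finite poset $P$ with $|P|=N$: the height statistics and the cardinality vector of antichains of $P$ are forced to equal the invariants computed from the degrees of $W$ and of its standard parabolic subsystems (the Narayana numbers, the Catalan number, the coheight partition); Property~\ref{prop:parabolicsubsystems} forces $P$ to contain, as order ideals indexed by the subsets of $\simpleSystem$, isomorphic copies of the root posets of the standard parabolic subsystems; and the two final properties constrain, respectively, the orbits of the natural rowmotion-type cyclic action on the antichains of $P$ and the bivariate antichain statistic conjecturally governed by the $q,t$-Catalan polynomial. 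Thus in each type the collection of ``possible root posets'' is a well-defined finite set, and the theorem is a list of facts about it.

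For assertion~(i) I would run the search bottom-up. The minimal elements of $P$ must be the $|\simpleSystem|$ simple roots; one then adds elements of successive height, at each stage discarding partial posets that already violate a necessary consequence of one of the properties --- wrong number of elements of a given height, too many or too few antichains of a prescribed cardinality, impossibility of completing a parabolic sub-ideal --- and reducing modulo poset isomorphism via a canonical form. For the dihedral types this search is transparent enough to be replaced by a direct argument: two minimal elements, a forced rank function with heights $1,2,\dots,m$, and the constraint that there is exactly one antichain of size two force every pair of elements other than the two minimal ones to be comparable, leaving only the poset consisting of two minimal elements both covered by the bottom of a chain of length $m-1$. For $H_3$ the parabolic condition already pins down two copies of the $I_2(5)$ root poset together with a small overlap pattern, after which the search terminates quickly on the unique poset of \fref{fig_posetH3}; that this poset satisfies \emph{all} of the properties, including Properties~\ref{prop:antichainorbitsize} and~\ref{prop:antichainorderidealsize}, is then a finite verification matching Armstrong's construction.

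Assertion~(ii) is the same search in type $H_4$ but with the shorter list, Properties~\ref{prop:parabolicsubsystems} through~\ref{prop:antichainsize}. Here $N=60$ and the raw search space is astronomically large, so feasibility rests entirely on the pruning. The four maximal standard parabolics of $H_4$ are of types $H_3$, $A_1\times I_2(5)$, $A_1\times A_2$, and $A_3$, whose root posets are already known; Property~\ref{prop:parabolicsubsystems} forces their overlapping copies inside $P$ and fixes a large part of the order relation before any branching occurs, and the antichain-cardinality vector (the $H_4$ Narayana numbers) then prunes aggressively. The output is enumerated in full and its cardinality reported --- in the billions; the correctness of the enumeration is precisely the assertion that every pruning rule used is a genuine consequence of the listed properties, which is checked property by property.

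For assertions~(iii) and~(iv) one takes the family produced in~(ii) and tests the two remaining properties on each member: for Property~\ref{prop:antichainorbitsize}, one computes the orbits of the natural cyclic action on the antichains of $P$ and compares their sizes with the prediction from the degrees of $H_4$; for Property~\ref{prop:antichainorderidealsize}, one computes the bivariate antichain statistic and compares it with the conjectural $q,t$-Catalan polynomial in type $H_4$. In both cases the outcome is that no member of the family passes. I expect the main obstacle to be exactly this step: one cannot afford the full orbit computation on billions of posets, so the crucial move is to isolate a single cheap necessary consequence of each of the two properties --- for instance one fixed orbit-size count, or one coefficient of the bivariate statistic --- and to show that this single quantity already takes the wrong value for \emph{every} poset in the family, collapsing the verification to a finite, lightweight check. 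A secondary subtlety, to be fixed before anything else, is to pin down once and for all the definitions of the cyclic action and of the bivariate statistic on a noncrystallographic ``root poset'', so that Properties~\ref{prop:antichainorbitsize} and~\ref{prop:antichainorderidealsize} are unambiguous; with those in place, the rest is bookkeeping together with an independently reproducible computation.
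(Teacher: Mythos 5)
Your overall architecture --- translate each property into an intrinsic finite test, construct posets inductively, prune with parabolic substructure and antichain counts --- is exactly the paper's strategy, and your treatment of parts (i) and (ii) matches it (the paper likewise settles $I_2(m)$ by hand, pins down a large skeleton of the $H_4$ poset from the parabolics and the forbidden size-$4$ antichains, and then runs two independent recursive searches). The genuine divergence, and the gap, is in your plan for (iii) and (iv). You propose to first produce the family of billions of posets from (ii) and then test Properties~\ref{prop:antichainorbitsize} and~\ref{prop:antichainorderidealsize} on its members, and you correctly observe that this is infeasible; but your proposed rescue --- finding a \emph{single} cheap necessary consequence (one fixed orbit-size count, one coefficient of the bivariate statistic) that takes the wrong value uniformly on all billions of posets --- is speculative, and nothing in the problem guarantees such a uniform obstruction exists. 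The paper does not find one.

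What the paper actually does is interleave the tests with the construction: at each stage of the recursion it computes all Panyushev orbits that are \emph{already entirely contained} in the partial poset built so far and aborts the branch as soon as one has a length other than $2,3,5,30$, and similarly uses the partial antichain counts of Property~\ref{prop:antichainsize} as upper bounds. This turns Property~\ref{prop:antichainorbitsize} into the dominant break condition and collapses the search tree so drastically that the full enumeration under Properties~\ref{prop:parabolicsubsystems}--\ref{prop:antichainorbitsize}(a) terminates with exactly four posets, each of which is then checked to violate Property~\ref{prop:antichainorbitsize}(b); the analogous search with Property~\ref{prop:antichainorderidealsize} runs in seconds and returns nothing. (A small instance of this pruning already appears before the search starts: assuming the cover $8\prec 13$ forces a Panyushev orbit of length $8$, so that cover can be excluded a priori.) Your proposal would need to be amended to test the properties on partial posets during the recursion rather than on the completed family; as written, the step from (ii) to (iii)--(iv) does not go through.
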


\begin{remar}
  We want to remark that even in the crystallographic types, Property~\ref{prop:antichainsize}, Property~\ref{prop:antichainorbitsize}(b), and Property~\ref{prop:antichainorderidealsize} are partially conjectured and remain open in full generality.
\end{remar}

\begin{remar}
  In~\cite{CK2007}, Y.~Chen and C.~Kriloff studied properties of root posets in noncrystallographic types in the sense of replacing $\NN$ by $\RR_{\geq 0}$ in the definition as mentioned above.
  Their study has a somewhat different flavor since they study the correspondence between antichains in root posets and region in Shi arrangements.
  As also mentioned above, these posets do not have the desired properties in the sense we discuss here.
  Nevertheless, the authors obtain, similarly to our results, well behaved connections between antichain and these regions in types $I_2(m)$ and $H_3$, while they show that the connections in type $H_4$ are not well behaved.
\end{remar}

We prove Theorem~\ref{th:maintheorem} by an exhaustive computer search for posets satisfying the various properties.
We used (and present here) two different algorithms for the computations.
In both algorithms we construct posets inductively. The breakthrough is achieved by testing the right properties at the right moment.

\bigskip

In Section~\ref{sec:rootposets}, we review the combinatorial properties of root posets for finite crystallographic root systems.
In Section~\ref{sec:H3}, we prove Theorem~\ref{th:maintheorem}\eqref{item1}.
Section~\ref{sec:H4} is devoted to provide two algorithmic approaches to prove Theorem~\ref{th:maintheorem}\eqref{item2}--\eqref{item4}.
In the final Section~\ref{sec:hilbertseries}, we describe how we obtained the conjectured Hilbert series needed to provide Property~\ref{prop:antichainorderidealsize} for type $H_4$.

\section{Root posets and their combinatorics}
\label{sec:rootposets}

Throughout this paper, let $\rootSystem$ be an irreducible and finite root system of rank~$n$ with simple roots~$\simpleSystem$ and positive roots~$\rootPoset$. Moreover, let $W = W(\rootSystem)$ be the corresponding reflection group acting on a real vector space $V$, $c \in W$ be a Coxeter element, $d_1 \geq \dots \geq d_n$ be the degrees for $W$, and $h = d_1$ its Coxeter number.
We refer to~\cite{Hum1990} for background on finite root systems and reflection groups.
It is well known that irreducible finite root systems are classified according to Cartan-Killing types

\begin{itemize}
  \item $A_n$ with $n \geq 1$,
  \item $B_n$ with $n \geq 2$,
  \item $C_n$ with $n \geq 3$,
  \item $D_n$ with $n \geq 4$,
  \item $G_2,F_4,E_6,E_7,E_8$,
  \item $H_3,H_4$, and $I_2(m)$ with $m = 5$ or $m \geq 7$.
\end{itemize}
A root system $\rootSystem$ is \defn{crystallographic} if the group $W(\rootSystem)$ stabilizes a lattice in $V$.
It is well known that the only noncrystallographic finite root systems are those of types $H$ and $I$.

\medskip

For a crystallographic root system, define the \defn{root poset} $\rootPoset = (\rootPoset,\leq)$ to be given by the partial order $\beta \leq \beta' :\Leftrightarrow \beta' - \beta \in \NN\Delta$ on positive roots.
This poset is \defn{graded}, i.e. all maximal chains in $\rootPoset$ have the same length. The rank of a positive root $\beta = \sum_{\alpha \in \simpleSystem} \lambda_\alpha \alpha$ is given by $\sum_{\alpha \in \simpleSystem} \lambda_\alpha$, and it is known that the maximal rank is equal to $h-1$.
The following definitions make sense for any poset, though we discuss them here only in the context of root posets. For a set $X \subseteq \rootPoset$, we denote by $\min(X)$ and by $\max(X)$ the set of all minimal and maximal positive roots in $X$, respectively.
An \defn{antichain} in $\rootPoset$ is a set of positive roots of mutually incomparable elements. We denote the collection of all antichains in $\rootPoset$ by
$$\antichains(\rootSystem) = \big\{ A \subseteq \rootPoset : A \text{ antichain} \big\}.$$
Moreover, an \defn{order ideal} in $\rootPoset$ is a set $I$ of positive roots such that
$$ \beta \in I,\:\: \gamma \leq \beta \quad\Longrightarrow\quad \gamma \in I,$$
and we define as well $\orderideals(\rootSystem)$ to be the collection of all order ideals in $\rootPoset(\rootSystem)$.
Obviously, antichains and order ideals are in one-to-one-correspondence by sending
\begin{align*}
  A \mapsto I(A) &:= \big\{ \gamma \in \rootPoset : \gamma \leq \beta \text{ for some } \beta \in A \big\}, \\
  I \mapsto A(I) &:= \max(I).
\end{align*}

We are now in the position to discuss the enumerative properties of antichains (or order ideals) in root posets for crystallographic root systems. In Property~\ref{prop:parabolicsubsystems}, we collect some direct consequences of the definition, Property~\ref{prop:exponentsinranks} was discussed e.g.\ in~\cite[Theorem~3.20]{Hum1990}, Properties~\ref{prop:antichains} and~\ref{prop:antichainsize} can be found in~\cite[Section~5.4.1]{Arm2006}, while Properties~\ref{prop:antichainorbitsize} and~\ref{prop:antichainorderidealsize} are taken from~\cite{AST2010} and from~\cite{Stu2010}, respectively.
All these enumerative properties can as well be studied for noncrystallographic root systems, and each of them can be used to find possible root posets in these types.

\begin{property}[Basic properties of root systems]
\label{prop:parabolicsubsystems}
This first property is rather a collection of \lq\lq natural\rq\rq\ properties of root posets that are satisfied by definition.
\begin{enumerate}[1.]
  \item The minimal elements in $\rootPoset$ correspond to the simple system $\simpleSystem$ for the root system $\rootSystem$.
  \item There is a unique maximal element in the poset $\rootPoset$.
  \item The poset $\rootPoset$ is graded.
  \item The poset $\rootPoset$ behaves well with respect to standard parabolic subsystems. This is, the subposet of $\rootPoset$ given by all $\beta \in \rootPoset$ which lie in a subspace generated by a subset $\simpleSystem'$ of the simple system $\simpleSystem$ is equal to the root poset for the root system $\rootSystem'$ generated by $\simpleSystem'$. \label{eq:parabolic}
\end{enumerate}
\end{property}

\begin{property}[The degrees of a root system]
\label{prop:exponentsinranks}
The second property describes how the degrees of a root system are encoded in the root poset, or rather in the rank generating function of the positive roots.
It is taken from~\cite[Theorem~3.20]{Hum1990}.
\begin{theor}
  Let $d_1 \geq \ldots \geq d_n$ denote the degrees of $\rootSystem$. Then the number $\rank_i$ of positive roots in $\rootPoset$ of rank $i$ is given by
  $$\rank_i = \Big|\big\{ j : d_j > i \big\} \Big|.$$
  In particular, the maximal rank in $\rootPoset$ is given by $h-1$.
\end{theor}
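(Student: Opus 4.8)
The plan is to deduce this from the classical relationship between the height filtration of the positive roots and the exponents of $W$, so I will only sketch the standard argument (this is \cite[Theorem~3.20]{Hum1990}). Write $m_j := d_j - 1$ for the exponents, so that $m_1 \ge \cdots \ge m_n$ and $m_1 = h-1$. Since $d_j > i$ is equivalent to $d_j \ge i+1$, i.e.\ to $m_j \ge i$, the asserted identity $\rank_i = |\{j : d_j > i\}|$ is the same as $\rank_i = |\{j : m_j \ge i\}|$ for every $i \ge 1$; in words, the partition $(\rank_1 \ge \rank_2 \ge \cdots)$ of $N := |\rootPoset|$ is conjugate to the partition of exponents $(m_1 \ge \cdots \ge m_n)$. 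Granting this, the final assertion is immediate: the largest $i$ with $\rank_i \ne 0$ equals the largest part of the conjugate partition, namely $m_1 = h-1$.

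To prove the conjugacy I would use the principal $\mathfrak{sl}_2$, which is available here since $\rootSystem$ is crystallographic. Let $\mathfrak{g}$ be the complex simple Lie algebra with root system $\rootSystem$ and Cartan subalgebra $\mathfrak{h}$, fix Chevalley generators, and form the principal $\mathfrak{sl}_2$-triple $(e,h_0,f)$ with $e = \sum_{\alpha \in \simpleSystem} e_\alpha$; then $h_0 = [e,f] = 2\rho^\vee$. The point is that $\operatorname{ad}(h_0)$ acts on the root space $\mathfrak{g}_\alpha$ by the scalar $\langle 2\rho^\vee,\alpha\rangle = 2\operatorname{ht}(\alpha)$ and acts by $0$ on $\mathfrak{h}$. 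Hence, for $i \ge 1$, the multiplicity of the eigenvalue $2i$ of $\operatorname{ad}(h_0)$ on $\mathfrak{g}$ is exactly the number of positive roots of height $i$, that is $\rank_i$.

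On the other hand, by Kostant's theorem on the principal three-dimensional subgroup, $\mathfrak{g}$ decomposes as an $\mathfrak{sl}_2$-module (via this triple) as $\mathfrak{g} \cong \bigoplus_{j=1}^{n} V(2 m_j)$, where $V(k)$ denotes the irreducible $\mathfrak{sl}_2$-module of highest weight $k$ and the $m_j$ are precisely the exponents of $W$. For $i \ge 1$, each summand $V(2m_j)$ has $2i$ among its $h_0$-weights exactly when $m_j \ge i$, so the multiplicity of the eigenvalue $2i$ on the right-hand side is $|\{j : m_j \ge i\}|$. Comparing with the previous paragraph gives $\rank_i = |\{j : m_j \ge i\}|$, as desired.

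The only substantial input is Kostant's decomposition of $\mathfrak{g}$ under the principal $\mathfrak{sl}_2$; everything else is bookkeeping, and I expect this to be the main obstacle if one wants a self-contained account. One can instead avoid Lie theory altogether and argue inside the root system by using a bipartite Coxeter element $c = c'c''$, analysing the $c$-orbits on $\rootPoset \cup (-\rootPoset)$ together with the eigenvalues $e^{2\pi\sqrt{-1}\,m_j/h}$ of $c$ (as in \cite{Hum1990}); that route is elementary but the combinatorial control of the orbit structure is then the delicate step.
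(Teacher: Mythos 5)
Your argument is correct. Note, however, that the paper offers no proof of this statement at all: it is imported verbatim from \cite[Theorem~3.20]{Hum1990}, so there is nothing internal to compare against. What you give is a valid sketch of one of the two classical proofs of the fact that the height partition of $\Phi^+$ is conjugate to the partition of exponents $(m_1\ge\cdots\ge m_n)$: the reduction $d_j>i\Leftrightarrow m_j\ge i$ is right, the computation that $\operatorname{ad}(h_0)=\operatorname{ad}(2\rho^\vee)$ acts on $\mathfrak{g}_\alpha$ by $2\operatorname{ht}(\alpha)$ is right, and Kostant's decomposition $\mathfrak{g}\cong\bigoplus_j V(2m_j)$ under the principal $\mathfrak{sl}_2$ then gives the multiplicity count exactly as you say; the deduction that the maximal rank is $m_1=h-1$ is immediate. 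The proof actually presented in Humphreys~\S3.20 is the other classical route (Macdonald's generating-function argument, comparing the identity $\sum_{w\in W}t^{\ell(w)}=\prod_j\frac{1-t^{d_j}}{1-t}$ with the product $\prod_{\alpha>0}\frac{1-t^{\operatorname{ht}(\alpha)+1}}{1-t^{\operatorname{ht}(\alpha)}}$), which avoids Lie algebras but needs the $q$-analogue of the Weyl denominator identity instead; your closing remark about the bipartite Coxeter element describes yet a third (Kostant--Steinberg style) route. The only caveat worth keeping in mind is that some expositions of Kostant's theorem derive the highest weights $2m_j$ precisely from the height partition, so to avoid circularity you must take as input the version of Kostant's theorem that identifies the highest weights with the exponents defined via the eigenvalues of a Coxeter element (or via the degrees); as long as that is the black box you invoke, the argument is sound.
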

The ranks $\rank(\rootPoset) = (\rank_1,\dots,\rank_{h-1})$ of a noncrystallographic root poset $\rootPoset$ satisfying this property are collected in \tref{tab:ranks}, where we write $i^j$ for $j$ consecutive ranks of size $i$.
\begin{table}[t]
  \centering
  \begin{tabular}[h]{r|c|c}
          & $\rank(\rootPoset)$ & $|\antichains(\rootPoset)|$ \\
    \hline
    \hline
    &&\\[-10pt]
    $I_2(m)$ & $(2,1^{m-2})$ & $m+2$ \\
    \hline
    &&\\[-10pt]
    $H_3$ & $(3,2^4,1^4)$ & $32$ \\
    \hline
    &&\\[-10pt]
    $H_4$ & $(4,3^{10},2^8,1^{10})$ & $280$ \\
  \end{tabular}
  \caption{The desired ranks and desired numbers of antichains.}
  \label{tab:ranks}
\end{table}
\end{property}

\begin{property}[Counting antichains]
\label{prop:antichains}
The \defn{$\rootSystem$-Catalan numbers} are defined as
$$\Cat(\rootSystem) = \prod_{i = 1}^n \frac{d_i + h}{d_i}.$$
The following theorem is due to A.~Postnikov, see~\cite[Remark~2]{Rei1997}.
\begin{theor}
  Antichains (or equivalently order ideals) in the root poset are counted by the $\rootSystem$-Catalan numbers,
  $$\big| \antichains(\rootSystem) \big| = \Cat(\rootSystem).$$
\end{theor}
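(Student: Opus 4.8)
Since $A\mapsto I(A)$ is a bijection between antichains and order ideals for any poset, and complementation turns order ideals into order filters (up-sets), it suffices to count the order filters of $\rootPoset$; equivalently, I count antichains through the bijection $A\mapsto\langle A\rangle:=\{\gamma\in\rootPoset:\gamma\ge\beta\text{ for some }\beta\in A\}$ with inverse $F\mapsto\min(F)$. The plan is to realize order filters of $\rootPoset$ geometrically and then read off the count $\prod_{i=1}^n(d_i+h)/d_i$ from a known enumeration of the resulting objects.

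The step at which crystallographicity enters is that addition of roots respects the order: if $\beta\in F$ and $\gamma\in\rootPoset$ with $\beta+\gamma\in\rootPoset$, then $\beta+\gamma\ge\beta$, so $\beta+\gamma\in F$. Hence the order filters of $\rootPoset$ are precisely the subsets closed under adding positive roots whenever the sum is again a root; these are exactly the sets of roots spanning an ad-nilpotent ideal $\bigoplus_{\gamma\in F}\mathfrak{g}_\gamma$ of a fixed Borel subalgebra of the semisimple Lie algebra with root system $\rootSystem$. Dually, and this is the description I would use, order filters biject with the dominant regions of the Shi arrangement $\mathrm{Shi}(\rootSystem)$, whose hyperplanes are $\langle x,\alpha\rangle=0$ and $\langle x,\alpha\rangle=1$ for $\alpha\in\rootPoset$: attaching to a point $x$ of the open dominant cone $\{\langle x,\alpha\rangle>0\text{ for all }\alpha\in\rootPoset\}$ its separating set $\{\alpha\in\rootPoset:\langle x,\alpha\rangle>1\}$ produces an order filter that is constant on each region, and the content of the correspondence (Shi; reformulated via ad-nilpotent ideals by Cellini--Papi, with refinements by Athanasiadis) is that the separating set determines the region and that every order filter occurs. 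One then enumerates dominant regions: the characteristic polynomial of $\mathrm{Shi}(\rootSystem)$ is known to be $(t-h)^n$, so there are $(h+1)^n$ regions overall, and a restriction of the count to the dominant cone (equivalently, the Cellini--Papi enumeration of ad-nilpotent ideals) gives exactly $\prod_{i=1}^n(d_i+h)/d_i$ of them. Composing the bijections yields $|\antichains(\rootSystem)|=\prod_i(d_i+h)/d_i=\Cat(\rootSystem)$.

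The main obstacle is not the elementary bijections $\antichains\leftrightarrow\orderideals$ but the two pieces of geometric input: (i) that \emph{every} order filter $F$ is realized as a separating set, i.e.\ that the system ``$\langle x,\alpha\rangle>1$ on $F$ and $0<\langle x,\alpha\rangle<1$ on $\rootPoset\setminus F$'' is always consistent --- this does not follow formally from $F$ being an up-set and is governed by the combinatorics of ``geometric chains of roots''; and (ii) the count of dominant Shi regions itself. Because of this, the identity was originally proved by Postnikov case by case, see~\cite[Remark~2]{Rei1997}: explicit bijections with non-nesting set partitions and their signed analogues handle types $A,B,C,D$, and a machine enumeration of antichains disposes of $G_2,F_4,E_6,E_7,E_8$. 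I would present the conceptual argument, but note that for the noncrystallographic types treated here there is no ambient Lie algebra, Shi arrangement, or even an a priori root poset, so only the computational style of verification is available --- and confirming $|\antichains(\rootPoset)|=\Cat$ for a candidate poset is exactly one of the conditions imposed in the searches of this paper.
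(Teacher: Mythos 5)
The paper offers no proof of this theorem at all: it is stated as a known result of A.~Postnikov and the reader is referred to \cite[Remark~2]{Rei1997}, where the identity was verified case by case (bijections with nonnesting partitions in the classical types, computer enumeration in the exceptional ones). Your proposal is consistent with that --- you cite the same source for the actual verification --- but you additionally supply the standard conceptual route through order filters, ad-nilpotent ideals, and dominant regions of the Shi arrangement, which the paper does not discuss. That sketch is accurate: order filters of $\rootPoset$ do coincide with subsets closed under adding positive roots (using that covering relations in the crystallographic root poset are given by adding simple roots), the separating-set map does identify dominant Shi regions with order filters, and the count $\prod_{i=1}^n (d_i+h)/d_i$ of dominant regions is the Cellini--Papi/Athanasiadis theorem (resting on Haiman's enumeration of $W$-orbits on $Q/(h+1)Q$). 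You are also right to flag the two genuine inputs --- that every order filter is realized by a region, and the dominant-region count itself --- as the places where real work (or case analysis) is hidden; neither follows from the characteristic polynomial $(t-h)^n$ alone. Finally, your closing remark is exactly the point of view taken in this paper: for the noncrystallographic types there is no Lie algebra or Shi arrangement to lean on, so $|\antichains(\rootSystem)|=\Cat(\rootSystem)$ is imposed as a combinatorial axiom (Property~3) on candidate posets rather than proved. What your approach buys over the bare citation is an explanation of \emph{why} the theorem holds uniformly in the crystallographic case and why no such argument is available in types $H$ and $I$.
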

\end{property}
This theorem yields the desired numbers of antichain for root posets of noncrystallographic types as collected in \tref{tab:ranks}.

\bigskip

The following two properties (Properties~\ref{prop:antichainsize} and~\ref{prop:antichainorbitsize}) are both strictly stronger than this property.
This is to say that both provide in particular the above counting formula for antichains.

\begin{property}[Counting antichains according to size]
\label{prop:antichainsize}
This property concerns the number of antichains of a particular size containing a particular number of simple roots. These numbers are encoded in the following generating function
$$H(\rootSystem;s,t) = \sum_{A \in \antichains(\rootSystem)} s^{|A \cap \simpleSystem|} t^{|A|}.$$

There is a remarkable conjectured connection between this and two other generating functions. These encode information about the noncrossing partition lattice and about the cluster complex. The three generating functions were defined by F.~Chapoton in~\cite{Cha2004,Cha2006} and are called \defn{Chapoton's H-triangle}, \defn{M-triangle}, and \defn{F-triangle}.
We restrict our attention here to the $H$- and the $M$-triangle. For a detailed treatment of all three, see Chapoton's original papers~\cite{Cha2004,Cha2006} and as well~\cite[Section~5]{Arm2006}.

The M-triangle encodes information about the Möbius function on the \defn{noncrossing partition lattice} $\NC(\rootSystem)$ given by all elements in the interval $[\one,c]$ in $W$ endowed with the absolute order where $c$ denotes a Coxeter element in $W$. It is defined by
$$M(\rootSystem;x,y) = \sum_{\substack{\sigma,\pi \in \NC(\rootSystem) \\ \sigma \leq \pi}} \mu(\sigma,\pi) x^{n-\rank(\sigma)} y^{n-\rank(\pi)},$$
where $\mu:\NC(\rootSystem) \times \NC(\rootSystem) \longrightarrow \ZZ$ denotes the Möbius function on the lattice $\NC(\rootSystem)$.
Observe that we dropped the Coxeter element $c$ from the notations. This is due to the fact that the resulting lattice is independent of the chosen Coxeter element.
F.~Chapoton conjectured deep numerical interactions between antichains in the root poset and the Möbius function on the noncrossing partition lattice. This conjecture remains open.
\begin{conje}
\label{con:chapoton}
  Chapoton's $H$-triangle is related to the $M$-triangle via the identity
  \begin{align*}
    H(\rootSystem;s,t) &= (1 + (s-1)t)^n M\left(\rootSystem; \frac{s}{s-1},\frac{(s-1)t}{1+(s-1)t} \right).
  \end{align*}
\end{conje}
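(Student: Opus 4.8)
\emph{A possible line of attack.} Conjecture~\ref{con:chapoton} equates the polynomial $H(\rootSystem;s,t)$ with a rational expression built from the $M$-triangle, so a first, preliminary task is to check that the substitution appearing there really does turn $M(\rootSystem;x,y)$ into a polynomial and not merely a rational function; this can be extracted from the boundary behaviour of the $M$-triangle --- the degenerations obtained by specialising one of its variables --- which is in turn dictated by the defining recursion for the Möbius function of the lattice $\NC(\rootSystem)$. Granting this, the plan is to reduce the identity to a case analysis along the Cartan--Killing classification. Two structural reductions make this tractable: both sides are multiplicative over orthogonal direct sums, since by Property~\ref{prop:parabolicsubsystems} the root poset of $\rootSystem_1\oplus\rootSystem_2$ is the disjoint union of $\rootPoset(\rootSystem_1)$ and $\rootPoset(\rootSystem_2)$ --- whence $\antichains(\rootSystem_1\oplus\rootSystem_2)\cong\antichains(\rootSystem_1)\times\antichains(\rootSystem_2)$ with the two statistics adding --- while $\NC(\rootSystem_1\oplus\rootSystem_2)\cong\NC(\rootSystem_1)\times\NC(\rootSystem_2)$ as lattices, so that its Möbius function factors as well; hence it suffices to treat the irreducible types, which fall into the infinite families $A$, $B/C$, $D$, $I_2(m)$ together with finitely many exceptional types.

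For the infinite families the plan is to produce closed two-variable formulas for both triangles and to compare them. On the left one uses the standard combinatorial models of antichains in the root poset (the nonnesting partitions of the relevant symmetry type), tracking the joint distribution of $|A|$ and $|A\cap\simpleSystem|$ so as to write $H(\rootSystem;s,t)$ in closed form; for the dihedral type $I_2(m)$ this is immediate from the ranks in \tref{tab:ranks}. On the right one uses the description of $\NC(\rootSystem)$ as a lattice of (possibly symmetric) noncrossing set partitions, together with the known values of its Möbius function, to assemble the rank-refined sum $M(\rootSystem;x,y)$; for $I_2(m)$ the lattice $\NC(I_2(m))$ has only $m+2$ elements and $M$ can be written down directly. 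Substituting and equating the two sides then comes down, in each family, to a Vandermonde- or hypergeometric-type polynomial identity.

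The remaining exceptional types $G_2,F_4,E_6,E_7,E_8$, and --- using the root posets constructed in this paper --- the type $H_3$ (and the candidate posets of type $H_4$), are finite computations: enumerate $\antichains(\rootSystem)$ with its two statistics to obtain $H(\rootSystem;s,t)$, compute $\NC(\rootSystem)$ and its Möbius function to obtain $M(\rootSystem;x,y)$, substitute, and compare coefficients. For the noncrystallographic posets there is in fact nothing further to check, since for such a type the assertion of Conjecture~\ref{con:chapoton} is precisely Property~\ref{prop:antichainsize}, one of the properties those posets are constructed to satisfy.

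The main obstacle lies in the classical families, and specifically in the $M$-side for types $B/C$ and $D$: although each individual Möbius value on $\NC(\rootSystem)$ is known, packaging them into $M(\rootSystem;x,y)$ and matching the result against the $H$-side is a genuine identity for which no uniform, type-free argument is available, so the proof is unavoidably case-by-case. I expect the cleanest route is to factor the problem through Chapoton's $F$-triangle rather than to attack the $H\leftrightarrow M$ relation head-on: first establish the $H\leftrightarrow F$ relation, exploiting that antichains in the root poset refine the face enumeration of the cluster complex, and then appeal to the $F\leftrightarrow M$ correspondence, whose verification is aided by the shellability of the cluster complex and by the fact that its $h$-vector is given by the Narayana numbers. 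Even so, a genuinely uniform proof appears to be beyond current techniques, so what I would ultimately aim for is a complete case-by-case verification --- the same method by which the noncrystallographic types are handled in the present paper.
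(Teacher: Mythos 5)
This statement is not proved in the paper at all: it is Chapoton's conjecture, and the paper explicitly says ``This conjecture remains open.'' In the paper it is used only as a source of numerical predictions --- the $H$-triangles in \tref{tab:triangle} are computed from the (known) $M$- and $F$-triangles via the conjectured identity and then imposed as Property~\ref{prop:antichainsize} on candidate posets. So there is no proof of the authors' to compare yours against, and anything you write here should be judged as a free-standing argument.

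Judged that way, your proposal has a genuine gap: it is a research plan, not a proof, and you concede as much at the end. The reductions you describe (polynomiality of the substituted $M$-triangle, multiplicativity over orthogonal direct sums, reduction to irreducible types, finite checks for the exceptional and dihedral types) are all sound and standard, but they only isolate the hard core of the problem; they do not resolve it. The decisive step --- actually establishing the identity for the infinite families $A$, $B/C$, $D$, whether directly as ``a Vandermonde- or hypergeometric-type polynomial identity'' or via the detour through the $F$-triangle --- is precisely where the conjecture is open, and your text offers no argument there beyond naming the combinatorial models one would use. One smaller point: for the noncrystallographic types your claim that ``there is nothing further to check'' is circular rather than contentful. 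For those types the $H$-triangle is \emph{defined} by evaluating the right-hand side of the conjectured identity (there being no root poset a priori), and the posets of types $I_2(m)$ and $H_3$ are then \emph{constructed} to realize it; this is a consistency check on the construction, not evidence for the conjecture, and in type $H_4$ no such poset satisfying all properties exists at all. So the honest conclusion is that your proposal correctly maps the terrain but does not close the statement, which remains a conjecture both in the paper and after your attempt.
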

\end{property}

\begin{table}[t]
  \centering
  \begin{tabular}{c | l}
    & $H(\rootSystem;s,t)$ \\
    \hline
    \hline
    &\\[-10pt]
    $I_2(m)$ & \small $1 + 2st + (m-2)t + s^2t^2$ \\
    \hline
    &\\[-10pt]
    $H_3$ & \small $1 + 3st + 12t + 3s^2t^2 + 4st^2 + 8t^2 + s^3t^3$ \\
    \hline
    &\\[-10pt]
    $H_4$ & \small $1 + 4st + 56t+ 6s^2t^2 + 19st^2 + 133t^2 + 4s^3t^3 + 5s^2t^3 + 9 st^3 + 42t^3 + s^4t^4$
  \end{tabular}
  \caption{The desired $H$-triangles.}
  \label{tab:triangle}
\end{table}

Conjecture~\ref{con:chapoton} can now be used to determine the sizes of the antichains in noncrystallographic root posets satisfying this property. The corresponding generating functions are collected in \tref{tab:triangle}. They are taken from~\cite[Figure~5.14]{Arm2006} and independently verified using both the $M$- and the $F$-triangle.

\begin{property}[Counting antichains according to Panyushev orbits]
\label{prop:antichainorbitsize}

This property concerns a cyclic action on antichains in the root poset defined by D.I.~Panyushev in~\cite{Pan2008}. He conjectured multiple numerical results which were then later proven by D.~Armstrong, H.~Thomas, and the second author in~\cite{AST2010}.
The \defn{Panyushev map} $\Pan : \antichains(\rootSystem) \longrightarrow \antichains(\rootSystem)$ is given by mapping an antichain $A$ to the collection of minimal elements in the root poset among all elements that are not in the order ideal generated by~$A$,
$$\Pan(A) := \min \big( \rootPoset \setminus I(A) \ \big).$$

It is shown in~\cite{AST2010} that the orbit structure of $\Pan$ on $\antichains(\rootPoset)$ coincides with another remarkable action on the noncrossing partition lattice given by the \defn{Kreweras map} $\Krew : \NC(\rootSystem) \longrightarrow \NC(\rootSystem)$ which is defined by
$$\Krew(\sigma) = \sigma^{-1}c.$$
This was done by constructing an equivariant bijection $\bijection : \antichains(\rootSystem) \tilde \longrightarrow \NC(\rootSystem)$.
We denote by $\orbits(\rootSystem)$ the multiset of sizes of Kreweras orbits in $\NC(\rootSystem)$,
$$\orbits(\rootSystem) = \multiset{ | \orbit | : \orbit \subseteq W \text{ is a Kreweras orbit} }.$$
The multisets of Kreweras orbit sizes for the noncrystallographic types are collected in \tref{tab:orbits}. They should coincide with the Panyushev orbit sizes of a noncrystallographic root poset satisfying this property.
\begin{table}[t]
  \centering
  \begin{tabular}{c | l}
    & $\orbits(\rootSystem)$ \\
    \hline
    \hline
    &\\[-10pt]
    $I_2(m)$ & $\multiset{2,m}$ \\
    \hline
    &\\[-10pt]
    $H_3$ & $\multiset{2,10^3}$ \\
    \hline
    &\\[-10pt]
    $H_4$ & $\multiset{2,3,5,30^9}$
  \end{tabular}
  \caption{The desired multisets of Panyushev orbit sizes.}
  \label{tab:orbits}
\end{table}

\medskip

The following extensions of this property complete the collection of (partially conjectured) properties of root posets.

\bigskip

\noindent{\bf Property~5(a).} 
As conjectured in~\cite{Pan2008} and proven in~\cite{AST2010}, the average number of positive roots in an antichain in a Panyushev orbit is constant,
$$\frac{1}{|\orbit|}\sum_{A \in \orbit}|A| = \frac{n}{2},$$
where $\orbit$ is any Panyushev orbit in $\antichains(\rootSystem)$, and $n$ is the rank of $\rootSystem$.

\bigskip

\noindent{\bf Property~5(b).}
The previous property has a counterpart on the average number of positive roots in an antichain in a \lq\lq restricted Panyushev orbit\rq\rq. Let $\posrootPoset$ be the poset obtained from the root poset $\rootPoset$ by deleting the simple roots, and let $\posPan : \antichains(\posrootPoset) \longrightarrow \antichains(\posrootPoset)$ be the \defn{restricted Panyushev map} defined in the same way as the Panyushev map $\Pan: \antichains(\rootPoset) \longrightarrow \antichains(\rootPoset)$ with the poset $\rootPoset$ replaced by $\posrootPoset$.
It is conjectured in~\cite{Pan2008} that the average number of positive roots in an antichain in a restricted Panyushev orbit is as well constant,
$$\frac{1}{|\orbit|}\sum_{A \in \orbit}|A| = \frac{n(h-2)}{2(h-1)},$$
where $\orbit$ is any restricted Panyushev orbit in $\antichains(\posrootPoset)$, $n$ is again the rank of $\rootSystem$, and where $h$ is the Coxeter number. Observe here that this conjecture holds true for simple reasons that can be deduced from~\cite{AST2010} in types $A$ and $B$, and was checked in the exceptional crystallographic types. Thus, only type $D$ remains open, and we expect that this type can as well be deduced from the results in~\cite{AST2010}.

\end{property}

\begin{property}[Counting antichains according to order ideal sizes]
\label{prop:antichainorderidealsize}
The last --~again conjectured~-- property comes from the theory of $q,t$-Catalan numbers. Those were extensively studied in type $A$ by various authors. We refer to~\cite{GH1996,Hag2008} and the references therein for detailed background and definitions.

Let $W = W(\rootSystem)$ act diagonally on $\CC[\x,\y] = \CC[x_1,\ldots,x_n,y_1,\ldots,y_n] = \CC[V \oplus V]$, let $\mathcal{J}$ be the ideal in $\CC[\x,\y]$ generated by all polynomials $f \in \CC[\x,\y]$ such that $w(f) = \det(w) f$ for all $w \in W$, and define the \defn{diagonal coinvariant ring} as
$$\coinv(\rootSystem) = \mathcal{J} \Big/ \langle \x,\y \rangle \mathcal{J}.$$
The \defn{$(\rootSystem;q,t)$-Catalan numbers} are defined to be the bigraded Hilbert series of the diagonal coinvariants (see Section \ref{sec:hilbertseries} for details),
$$ \Cat(\rootSystem;q,t) = \hilbert\big( \coinv(\rootSystem); q, t \big).$$
In the case of the symmetric group, it was proven by M.~Haiman in~\cite{Hai20022} that the specialization $t = 1$ in $\Cat(A_n;q,t)$ equals the size generating function of order ideals in the root poset of type $A_n$,
$$\Cat(A_n;q,1) = \sum_{I \in \orderideals(A_n)} q^{|I|}.$$
In~\cite{Stu2010}, the second author studied the $(\rootSystem;q,t)$-Catalan numbers for other reflection groups, and conjectured that this phenomenon describes the situation in general.
\begin{conje}
  The specialization $t=1$ in the $(\rootSystem;q,t)$-Catalan numbers yields the size generating function of order ideals in the root poset,
  $$\Cat(\rootPoset;q,1) = \sum_{I \in \orderideals(\rootPoset)} q^{|I|}.$$
\end{conje}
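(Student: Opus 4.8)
Both sides of the asserted identity are multiplicative over the irreducible components of $\rootSystem$ --- the diagonal coinvariant ring of a product is the tensor product of the factors, and the root poset of a reducible system is a disjoint union, over which $\sum_I q^{|I|}$ factors --- so one reduces to the irreducible case and argues type by type along the Cartan--Killing classification recalled above. For type $A_n$ there is nothing to do: the identity is exactly Haiman's theorem quoted above, $\Cat(A_n;q,1)=\sum_{I\in\orderideals(A_n)}q^{|I|}$. For the noncrystallographic types $I_2(m)$ and $H_3$, Theorem~\ref{th:maintheorem}\eqref{item1} forces the root poset to be the unique poset of \fref{fig_posetH3}, so both sides become completely explicit. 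For $I_2(m)$ the poset has ranks $(2,1^{m-2})$, i.e.\ two minimal elements lying below a chain on the remaining $m-2$ vertices; its order ideals are the empty set, the two singletons of simple roots, and for each $2\le k\le m$ a unique ideal of cardinality $k$, so
$$\sum_{I\in\orderideals(\rootPoset)}q^{|I|}=1+2q+q^2+q^3+\cdots+q^m,$$
whose coefficients sum to $m+2$, matching $|\antichains(\rootPoset)|=m+2$ from \tref{tab:ranks}. One then computes $\Cat(I_2(m);q,t)$ by a direct computation in $\CC[x_1,x_2,y_1,y_2]$ with the dihedral action and checks that $\Cat(I_2(m);q,1)$ equals this polynomial; for $m=3$ this is the classical $q,t$-Catalan identity for $S_3$ evaluated at $t=1$, namely $1+2q+q^2+q^3$. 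For $H_3$ one enumerates the $32$ order ideals of the poset of \fref{fig_posetH3}, tallies them by cardinality to get the right-hand polynomial, of degree $15=|\rootPoset(H_3)|$, computes the bigraded Hilbert series of $\coinv(H_3)$ by a finite computation in $\CC[x_1,x_2,x_3,y_1,y_2,y_3]$ with the $H_3$-action, specializes $t=1$, and compares coefficients.

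That settles the claim for type $A_n$ and for the noncrystallographic dihedral and $H_3$ types. For the remaining crystallographic types $B_n,C_n,D_n,G_2,F_4,E_6,E_7,E_8$ the right-hand side is again a concrete finite polynomial, but, as the Remark above records, the conjecture is genuinely open there: the obstruction is that the $t=1$ specialization of the bigraded Hilbert series of $\coinv(\rootSystem)$ is not available in closed form --- it can be computed directly only for the smaller exceptional Weyl groups, and for the classical families $B_n,C_n,D_n$ one would need a rank-uniform argument of the kind Haiman provides in type $A$. In type $H_4$ the situation is worse still: the bigraded Hilbert series of $\coinv(H_4)$ is not known at all, only the conjectural series of Section~\ref{sec:hilbertseries} is available, and Theorem~\ref{th:maintheorem}\eqref{item4} shows that no poset satisfying Properties~\ref{prop:parabolicsubsystems} through~\ref{prop:antichainsize} has its order-ideal generating function equal to the $t=1$ specialization of that candidate series; so in type $H_4$ the conjecture is incompatible with the remaining root-poset desiderata and should be read as pertaining only to the types in which a root poset genuinely exists.

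The hard part is thus everything beyond the explicit verifications above. Conceptually, there is no known type-uniform mechanism relating the bigraded Hilbert series of the diagonal coinvariant ring to the order-ideal statistics of the root poset; the only one available is Haiman's, which is special to type $A$ and rests on the geometry of the Hilbert scheme of points in the plane, so outside type $A$ an argument is forced into a computation rather than a proof --- and for the classical families the Hilbert series feeding that computation is itself not in hand. A genuinely complete proof would therefore require either an independent, type-uniform evaluation of the $q,t$-Catalan numbers or a conceptual substitute for Haiman's type-$A$ machinery, neither of which is currently available.
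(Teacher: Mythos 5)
The statement you were asked about is a \emph{conjecture} in the paper (taken from the second author's earlier work), not a theorem: the paper contains no proof of it, and the Remark following Theorem~\ref{th:maintheorem} explicitly records that Property~\ref{prop:antichainorderidealsize} ``remain[s] open in full generality'' even in the crystallographic types. So there is no proof in the paper to compare yours against, and your submission --- correctly --- does not claim to be one. Your partial verifications are consistent with what the paper asserts: type $A_n$ is Haiman's theorem, which the paper quotes; for $I_2(m)$ and $H_3$ the paper states that the bigraded Hilbert series ``can be directly computed using the definition, with the total degree of the polynomials $f \in \CC[\x,\y]$ being bounded by the number of positive roots,'' which is exactly the finite computation you describe, and your order-ideal tallies ($1+2q+q^2+\cdots+q^m$ for $I_2(m)$, and the $32$ ideals of the $H_3$ poset matching $[16]+q[10]+q[6]$ at $t=1$) check out against Tables~\ref{tab:ranks} and~\ref{tab:orderideals}. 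Your reading of the $H_4$ situation also matches the paper: there the Hilbert series is itself only conjectural (Conjecture~\ref{conj:hilbertpol}), and Theorem~\ref{th:maintheorem}\eqref{item4} says no candidate poset realizes its $t=1$ specialization, so the identity there functions as a desideratum rather than a provable statement.

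Two small caveats. First, the conjecture as literally stated concerns ``the root poset,'' which the paper defines only for crystallographic $\rootSystem$; for the noncrystallographic types the left-hand side is defined but the right-hand side only makes sense relative to a chosen candidate poset, so your $I_2(m)$ and $H_3$ verifications implicitly rely on Theorem~\ref{th:maintheorem}\eqref{item1} to pin down that poset --- which you do note, but it is worth being explicit that this is a verification of a \emph{compatible} statement rather than of the conjecture as written. Second, your multiplicativity reduction to irreducible types is fine but does no real work here, since the open cases ($B_n$, $C_n$, $D_n$, and the exceptional crystallographic types) are already irreducible. The substantive content of your last paragraph --- that outside type $A$ no type-uniform mechanism is known and the classical families lack even a computable Hilbert series --- is an accurate description of why the conjecture is open, and is where the paper also leaves matters.
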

\begin{table}[t]
  \small
  \centering
  \begin{tabular}{c | l}
    & $\Cat(\rootSystem;q,t)$ \\
    \hline
    \hline
    &\\[-10pt]
    $I_2(m)$ & $[m+1] + qt$ \\
    \hline
    &\\[-10pt]
    $H_3$ & $[16] + qt[10] + qt[6]$ \\
    \hline
    &\\[-10pt]
    $H_4$ & $[61] + qt[49] + qt[41] + q^2t^2[37] + qt[31] + $ \\
          & $q^3t^3[25] + q^2t^2[21] + q^4t^4[13] + q^6t^6 + q^{10}t^{10}$
  \end{tabular}
  \caption{The (conjectured) $q,t$-Catalan numbers for the noncrystallographic root systems.}
  \label{tab:orderideals}
\end{table}

In \tref{tab:orderideals}, we provide the (conjectured) $(\rootSystem;q,t)$-Catalan numbers for the noncrystallographic types, where we write
\[ [n] = [n]_{q,t} := \frac{q^n-t^n}{q-t} = q^{n-1}+q^{n-2}t+\ldots+qt^{n-2}+t^{n-1}. \]
For the dihedral types $I_2(m)$ and type $H_3$, these Hilbert series can be directly computed using the definition, with the total degree of the polynomials $f \in \CC[\x,\y]$ being bounded by the number of positive roots.
The computations for type $H_4$ are again much more complicated since the brute force approach is currently far beyond the feasibility of modern computers.
We discuss these computations in Section~\ref{sec:hilbertseries}.

\end{property}

\section{The poset of dihedral types and type \texorpdfstring{$H_3$}{H3}}
\label{sec:H3}

\subsection{The dihedral types}

In type $I_2(m)$, one can easily check by hand that the poset shown in \fref{fig_posetH3} (left) is the unique poset satisfying Properties~\ref{prop:parabolicsubsystems} and~\ref{prop:exponentsinranks}. It as well satisfies Properties~\ref{prop:antichains} through~\ref{prop:antichainorderidealsize}.

\subsection{Type \texorpdfstring{$H_3$}{H3}}

Let $W$ be the Coxeter group of type $H_3$, i.e.\ the group with Coxeter graph:
\begin{center}
\begin{tikzpicture}[baseline=4mm,x=12mm,y=4mm]
  \node[edot](1) at (1,2) {};
  \node[edot](2) at (2,2) {};
  \node[edot](3) at (3,2) {};
  \node(6) at (1.5,3) {5};
  \draw (1)--(2)(2)--(3);
\end{tikzpicture}
\end{center}

\begin{figure}
{\tiny

\begin{tikzpicture}[baseline=15mm,x=6mm,y=8mm]
\node[ndot](1) at (1,1) {1};
\node[ndot](2) at (3,1) {2};

\node[ndot](3) at (2,2) {3};
\node[ndot](4) at (2,3) {4};
\node[ndot](5) at (2,5) {m};

\begin{scope}[line width=0.8pt]
\draw (1) -- (3) (2) -- (3) (3) -- (4);
\draw[dashed] (4) -- (5);
\end{scope}
\end{tikzpicture}
\hspace*{50pt}
\begin{tikzpicture}[baseline=15mm,x=6mm,y=8mm]
\node[ndot](1) at (1,1) {1};
\node[ndot](2) at (3,1) {2};
\node[ndot](3) at (5,1) {3};

\node[ndot](5) at (2,2) {4};
\node[ndot](6) at (4,2) {5};
\node[ndot](8) at (2,3) {6};
\node[ndot](9) at (4,3) {7};
\node[ndot](11) at (2,4) {8};
\node[ndot](12) at (4,4) {9};
\node[ndot](14) at (2,5) {10};
\node[ndot](15) at (4,5) {11};
\node[ndot](17) at (3,6) {12};
\node[ndot](20) at (3,7) {13};
\node[ndot](23) at (3,8) {14};
\node[ndot](26) at (3,9) {15};

\begin{scope}[line width=0.8pt]
\draw (26)--(23) (23)--(20) (20)--(17) (17)--(14) (17)--(15) (14)--(11) (14)--(12) (15)--(12) (11)--(8)
(12)--(8) (12)--(9) (8)--(5) (9)--(5) (9)--(6) (5)--(1) (5)--(2) (6)--(2) (6)--(3);
\end{scope}
\end{tikzpicture}
}
\caption{The (unique) root posets of types $I_2(m)$ and $H_3$\label{fig_posetH3}}
\end{figure}
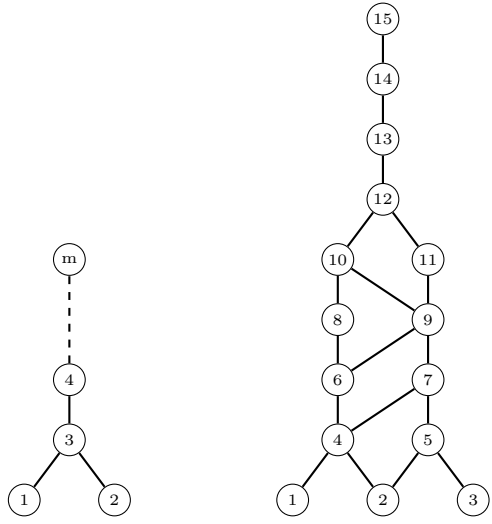

We use Algorithm~\ref{EnumeratePosets2} below to prove the uniqueness of a graded poset $(\Gamma=\{1,\ldots,15\},\prec)$ satisfying Properties~\ref{prop:parabolicsubsystems} through~\ref{prop:antichainorderidealsize}.

\medskip

Indeed, we have more in this type.
Namely, one can construct\footnote{Hugh Thomas informed us that he has a similar approach to the one we use here, in which one can construct the root poset of type $H_3$ as a subposet of the root poset of type $D_6$.} the root poset of type $H_3$ in a natural way from the root poset of type $D_6$.
Let $\tau=\frac{1+\sqrt{5}}{2}$ be the golden ratio.
Let $\PhD$ be the root poset for the root system corresponding to the Weyl group of type $D_6$ acting on the Euclidean space $\RR^6$ with standard basis $v_1,\ldots,v_6$ and bilinear form $(v_i\mid v_j)=\delta_{ij}$, see \cite[\S 6.7]{b-Kac90}. Then $\alpha_1=v_1-v_2,\ldots,\alpha_5=v_5-v_6,\alpha_6=v_5+v_6$ are the simple roots in $\PhD$.
\begin{figure}
\begin{tabular}{l l}
(0,0,0,0,1,1), & (1,-1,0,0,0,0),\\
(0,0,0,1,-1,0), & (0,1,-1,0,0,0),\\
(0,0,0,1,0,1), & (1,0,-1,0,0,0),\\
(0,0,1,-1,0,0), & (0,0,0,0,1,-1),\\
(0,0,1,0,0,-1), & (0,1,0,-1,0,0),\\
(0,0,1,0,1,0), & (1,0,0,-1,0,0),\\
(0,1,0,0,-1,0), & (0,0,0,1,0,-1),\\
(0,1,0,0,0,-1), & (0,0,1,0,-1,0),\\
(0,1,0,1,0,0), & (1,0,0,0,-1,0),\\
(0,1,1,0,0,0), & (1,0,0,0,0,-1),\\
(1,0,0,0,0,1), & (0,0,0,1,1,0),\\
(1,0,0,0,1,0), & (0,0,1,0,0,1),\\
(1,0,0,1,0,0), & (0,1,0,0,0,1),\\
(1,0,1,0,0,0), & (0,1,0,0,1,0),\\
(1,1,0,0,0,0), & (0,0,1,1,0,0).
\end{tabular}
\caption{The positive roots of the root system of type $D_6$\label{fig:posroD6}}
\end{figure}
With respect to the basis $v_1,\ldots,v_6$, the positive roots of $\PhD$ are listed in \fref{fig:posroD6}, where a pair $(\alpha,\beta)$ in a row is given as follows.
Write $v_i=\sum_j a_{i,j}\alpha_j$ for $a_{i,j}\in\ZZ$. With
\begin{align*}
\varepsilon :&\ \ZZ^6\rightarrow \ZZ^6, \quad v_i \mapsto (a_{i,1},\ldots,a_{i,6}), \quad \text{(base change)}\\
\gamma :&\ \ZZ^6\rightarrow \ZZ[\tau]^3, \quad (a_1,a_2,a_3,a_4,a_5,a_6) \mapsto (a_1+a_2\tau,a_3+a_4\tau,a_5+a_6\tau),
\end{align*}
we get $\tau\gamma(\varepsilon(\alpha))=\gamma(\varepsilon(\beta))$.
Let $\PhH$ be the set of positive roots of the root system of type $H_3$. With respect to any simple system $\Delta$, the coordinates of the elements of $\Phi^+$ are in $\ZZ[\tau]$.
Now $\varepsilon^{-1}(\gamma^{-1}(\PhH))$ is just the left column of \fref{fig:posroD6}.

In other words, there is a subspace $U\le \RR^6$ such that the restriction of the reflection arrangement of type $D_6$ to $U$ is the Coxeter arrangement of type $H_3$. Under this restriction, pairs of roots in $\PhD$ are mapped to pairs $\alpha,\tau\alpha\in\RR^3$ where $\alpha$ is a positive root of type $H_3$.
For each such pair, choose the lexicographically greater element in $\ZZ^6$ with respect to $v_1,\ldots,v_6$. This way, for each element of $\PhH$ we get a choice $1$ or $\tau$, i.e.\ a map $\sigma : \PhH \rightarrow \{1,\tau\}$.
The poset $(\{\sigma(\alpha)\alpha\mid \alpha\in \PhH\},\leq)$, where $\beta \leq \beta'$ is given by $\beta' - \beta \in \NN^3$, is (isomorphic to) the poset in \fref{fig_posetH3}(right).

\section{Posets of type \texorpdfstring{$H_4$}{H4}}
\label{sec:H4}

Let $W$ be the Coxeter group of type $H_4$, i.e.\ the group with Coxeter graph:
\begin{center}
\begin{tikzpicture}[baseline=4mm,x=12mm,y=4mm]
  \node[edot](1) at (1,2) {};
  \node[edot](2) at (2,2) {};
  \node[edot](3) at (3,2) {};
  \node[edot](4) at (4,2) {};
  \node(6) at (1.5,3) {5};
  \draw (1)--(2)(2)--(3)(3)--(4);
\end{tikzpicture}
\end{center}

Throughout this section, assume that
\[ (\Gamma=\{1,\ldots,60\},\prec)\]
is a poset satisfying Properties~\ref{prop:parabolicsubsystems} through~\ref{prop:antichainsize}. This is to say that it has $4$ minimal elements, a unique maximal element, and behaves well with respect to standard parabolic subsystems. Moreover, its grading is given as shown in \fref{rfcovers}, compare \tref{tab:ranks}.
Its number $n_{k,m}$ of antichains $A\subseteq \Gamma$ of $\Gamma$ with $m$ elements such that $|A\cap \{1,2,3,4\}|=k$ is shown by Figure~\ref{antinum}, where the entry in row $k$ and column $m$ is $n_{k,m}$, compare \tref{tab:triangle}.
\begin{figure}
\begin{tabular}{l | r r r r r}
& 0 & 1 & 2 & 3 & 4 \\ \hline
0& 1 & 56 & 133 & 42 &  0 \\
1& 0 &  4 & 19 &  9 &  0 \\
2& 0 &  0 &  6 &  5 &  0 \\
3& 0 &  0 &  0 &  4 &  0 \\
4& 0 &  0 &  0 &  0 &  1
\end{tabular}
\caption{Expected number of antichains depending on the number of simple roots they contain for root posets of type $H_4$.\label{antinum}}
\end{figure}

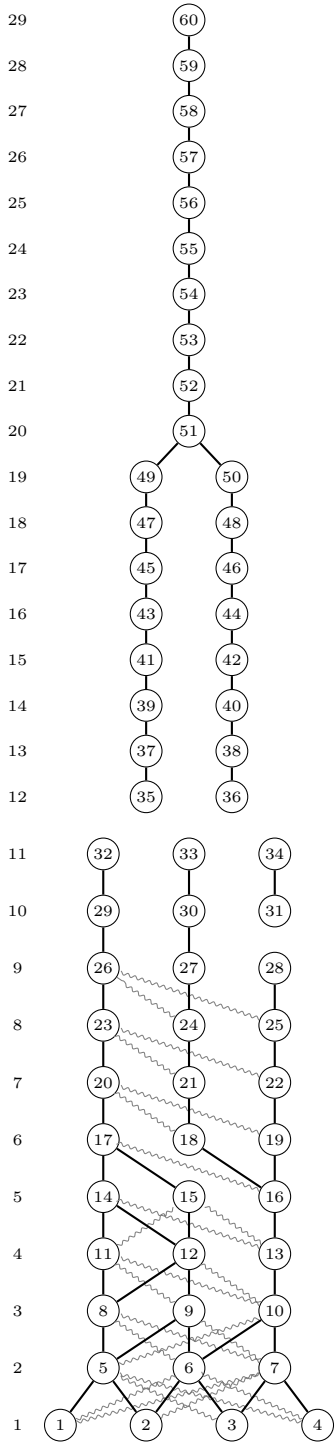
\begin{figure}
{\tiny
\begin{tikzpicture}[scale=.95,baseline=15mm,x=6mm,y=8mm]
\node(l1) at (0,1) {1};
\node(l2) at (0,2) {2};
\node(l3) at (0,3) {3};
\node(l4) at (0,4) {4};
\node(l5) at (0,5) {5};
\node(l6) at (0,6) {6};
\node(l7) at (0,7) {7};
\node(l8) at (0,8) {8};
\node(l9) at (0,9) {9};
\node(l10) at (0,10) {10};
\node(l11) at (0,11) {11};
\node(l12) at (0,12) {12};
\node(l13) at (0,12.8) {13};
\node(l14) at (0,13.6) {14};
\node(l15) at (0,14.4) {15};
\node(l16) at (0,15.2) {16};
\node(l17) at (0,16) {17};
\node(l18) at (0,16.8) {18};
\node(l19) at (0,17.6) {19};
\node(l20) at (0,18.4) {20};
\node(l21) at (0,19.2) {21};
\node(l22) at (0,20) {22};
\node(l23) at (0,20.8) {23};
\node(l24) at (0,21.6) {24};
\node(l25) at (0,22.4) {25};
\node(l26) at (0,23.2) {26};
\node(l27) at (0,24) {27};
\node(l28) at (0,24.8) {28};
\node(l29) at (0,25.6) {29};

\node[ndot](1) at (1,1) {1};
\node[ndot](2) at (3,1) {2};
\node[ndot](3) at (5,1) {3};
\node[ndot](4) at (7,1) {4};

\node[ndot](5) at (2,2) {5};
\node[ndot](6) at (4,2) {6};
\node[ndot](7) at (6,2) {7};
\node[ndot](8) at (2,3) {8};
\node[ndot](9) at (4,3) {9};
\node[ndot](10) at (6,3) {10};
\node[ndot](11) at (2,4) {11};
\node[ndot](12) at (4,4) {12};
\node[ndot](13) at (6,4) {13};
\node[ndot](14) at (2,5) {14};
\node[ndot](15) at (4,5) {15};
\node[ndot](16) at (6,5) {16};
\node[ndot](17) at (2,6) {17};
\node[ndot](18) at (4,6) {18};
\node[ndot](19) at (6,6) {19};
\node[ndot](20) at (2,7) {20};
\node[ndot](21) at (4,7) {21};
\node[ndot](22) at (6,7) {22};
\node[ndot](23) at (2,8) {23};
\node[ndot](24) at (4,8) {24};
\node[ndot](25) at (6,8) {25};
\node[ndot](26) at (2,9) {26};
\node[ndot](27) at (4,9) {27};
\node[ndot](28) at (6,9) {28};
\node[ndot](29) at (2,10) {29};
\node[ndot](30) at (4,10) {30};
\node[ndot](31) at (6,10) {31};
\node[ndot](32) at (2,11) {32};
\node[ndot](33) at (4,11) {33};
\node[ndot](34) at (6,11) {34};

\node[ndot](35) at (3,12) {35};
\node[ndot](36) at (5,12) {36};
\node[ndot](37) at (3,12.8) {37};
\node[ndot](38) at (5,12.8) {38};
\node[ndot](39) at (3,13.6) {39};
\node[ndot](40) at (5,13.6) {40};
\node[ndot](41) at (3,14.4) {41};
\node[ndot](42) at (5,14.4) {42};
\node[ndot](43) at (3,15.2) {43};
\node[ndot](44) at (5,15.2) {44};
\node[ndot](45) at (3,16) {45};
\node[ndot](46) at (5,16) {46};
\node[ndot](47) at (3,16.8) {47};
\node[ndot](48) at (5,16.8) {48};
\node[ndot](49) at (3,17.6) {49};
\node[ndot](50) at (5,17.6) {50};

\node[ndot](51) at (4,18.4) {51};
\node[ndot](52) at (4,19.2) {52};
\node[ndot](53) at (4,20) {53};
\node[ndot](54) at (4,20.8) {54};
\node[ndot](55) at (4,21.6) {55};
\node[ndot](56) at (4,22.4) {56};
\node[ndot](57) at (4,23.2) {57};
\node[ndot](58) at (4,24) {58};
\node[ndot](59) at (4,24.8) {59};
\node[ndot](60) at (4,25.6) {60};

\begin{scope}[decoration={snake,amplitude=.2mm,
        segment length=1mm,post length=0.3mm}]
\draw [decorate,gray] (4)--(6) (4)--(5) (3)--(5) (2)--(7) (1)--(7) (1)--(6) (7)--(9) (7)--(8) (6)--(8) (5)--(10) (10)--(12) (10)--(11) (9)--(11) (13)--(15) (13)--(14) (11)--(15) (16)--(17)
(19)--(20) (18)--(20) (22)--(23) (21)--(23) (25)--(26) (24)--(26);
\end{scope}

\begin{scope}[line width=0.8pt]
\draw (60)--(59) (59)--(58) (58)--(57) (57)--(56) (56)--(55) (55)--(54) (54)--(53) (53)--(52) (52)--(51) (51)--(49) (51)--(50) (49)--(47) (50)--(48) (47)--(45) (48)--(46) (45)--(43)
(46)--(44) (43)--(41) (44)--(42) (41)--(39) (42)--(40) (39)--(37) (40)--(38) (37)--(35) (38)--(36) (32)--(29) (33)--(30) (34)--(31) (29)--(26) (30)--(27) (26)--(23) (27)--(24)
(28)--(25) (23)--(20) (24)--(21) (25)--(22) (20)--(17) (21)--(18) (22)--(19) (17)--(14) (17)--(15) (18)--(16) (19)--(16) (14)--(11) (14)--(12) (15)--(12) (16)--(13) (11)--(8)
(12)--(8) (12)--(9) (13)--(10) (8)--(5) (9)--(5) (9)--(6) (10)--(6) (10)--(7) (5)--(1) (5)--(2) (6)--(2) (6)--(3) (7)--(3) (7)--(4);
\end{scope}

\end{tikzpicture}
}
\caption{Required and forbidden covers in root posets of type $H_4$ satisfying Properties~\ref{prop:parabolicsubsystems} through~\ref{prop:antichainsize}.\label{rfcovers}}
\end{figure}

\subsection{Some consequences of the assumptions}

\begin{propo}
We may assume without loss of generality that $\Gamma$ is build upon Figure~\ref{rfcovers}. I.e., $\Gamma$ contains at least the continuously drawn covers and does not contain the waved ones.
\end{propo}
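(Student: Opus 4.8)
The plan is to peel the four assumed properties off from most ``local'' to least, pinning down cover relations as we go. Throughout, for $x\in\Gamma$ set $\mathrm{supp}(x)=\{i\in\{1,2,3,4\}:i\prec x\}$. Since $\prec$ is transitive and $1,2,3,4$ are the minimal elements, $\mathrm{supp}$ is order-monotone, so $x\prec y$ forces $\mathrm{supp}(x)\subseteq\mathrm{supp}(y)$; in particular every cover relation inside a standard parabolic subposet $\{x:\mathrm{supp}(x)\subseteq\Delta'\}$ is still a cover relation in $\Gamma$. By Property~\ref{prop:parabolicsubsystems}\eqref{eq:parabolic} that subposet is the root poset of $W_{\Delta'}$. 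Applying this to the four maximal proper subsets $\{1,2,3\},\{2,3,4\},\{1,2,4\},\{1,3,4\}$ — of types $H_3$, $A_3$, $I_2(5)\times A_1$, $A_1\times A_2$, with $15,6,6,4$ positive roots — and running inclusion--exclusion over the pairwise intersections (types $I_2(5),A_2,A_2,A_1^2,A_1^2,A_1^2$ of sizes $5,3,3,2,2,2$) and the four singleton triple intersections gives $31-17+4=18$: exactly $18$ elements of $\Gamma$ have proper support, all of rank $\le 9$, namely the $15$ elements of the $H_3$-subposet together with the three elements of supports $\{4\},\{3,4\},\{2,3,4\}$ contributed by the $A_3$- and $I_2(5)\times A_1$-subposets. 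Since the root posets of types $H_3$ (Theorem~\ref{th:maintheorem}\eqref{item1}, Section~\ref{sec:H3}), $A_3$, $A_2$, $A_1$, $I_2(5)$ are each \emph{uniquely} determined, the induced order on these $18$ elements is completely determined; after fixing the labelling of \fref{rfcovers} accordingly, this order is precisely the ranks $1$--$9$ part of \fref{rfcovers}.

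This simultaneously settles the low part of the figure. It produces every continuously drawn cover among the $18$ low-support elements (each is a cover inside one of the parabolic root posets, hence a cover in $\Gamma$), and it rules out every waved cover: a finite check shows that for each waved pair $\{x,y\}$ with $\mathrm{rank}(y)=\mathrm{rank}(x)+1$ one has either $\mathrm{supp}(x)\not\subseteq\mathrm{supp}(y)$ (so $x\not\prec y$), or else $\mathrm{supp}(x)\cup\mathrm{supp}(y)$ lies in one of the four maximal proper subsets while $x\not\prec y$ already holds in the corresponding unique parabolic root poset; in both cases $x\lessdot y$ is impossible in $\Gamma$. This disposes also of the waved pairs involving a full-support element, since any element of rank $\le 9$ not among the $18$ necessarily has support $\{1,2,3,4\}$.

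Next, the top of the poset. $\Gamma$ is graded with a unique maximal element (Property~\ref{prop:parabolicsubsystems}, parts 2 and 3) and has the rank sizes of \tref{tab:ranks}; hence every non-maximal element is covered by an element of the next higher rank, and dually. Because ranks $20,\dots,29$ are singletons, this forces the chain $51\lessdot 52\lessdot\cdots\lessdot 60$; and because rank $20$ is a singleton sitting above the two rank-$19$ elements, it forces $51\gtrdot 49$, $51\gtrdot 50$ and that $51$ covers nothing else. That accounts for the remaining solid edges near the top.

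What is left is the ``ladder'' region of \fref{rfcovers} — the matchings and parallel chains drawn through ranks roughly $8$ to $19$ (e.g.\ $49\gtrdot 47$, $50\gtrdot 48$, $32\gtrdot 29$, $33\gtrdot 30$, $34\gtrdot 31$) — and this is the step I expect to be the main obstacle, since no proper parabolic reaches above rank $9$, so here only gradedness and the enumerative Properties~\ref{prop:antichains} and~\ref{prop:antichainsize} are available. The plan is a finite case analysis on the bipartite Hasse graph between consecutive small ranks: between two size-$2$ ranks this graph has $2$, $3$, or $4$ edges, and any $3$- or $4$-edge occurrence — likewise any non-matching bipartite graph between the size-$3$ ranks $10$ and $11$ — ``merges the two threads''. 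One then argues that such a merge is incompatible with the $H$-triangle of \tref{tab:triangle}/\fref{antinum}, most usefully with the facts that $\Delta$ is the only size-$4$ antichain and that there are exactly $133$ size-$2$ antichains of non-simple elements, which rigidly constrains the incomparable pairs among the non-simple elements (the low-support part being already known exactly), together with $|\antichains(H_4)|=280$. Carrying this bookkeeping through all eight size-$2$ layers is delicate; dispatching it by a short computer verification is the cleanest route and is in keeping with the computational approach used elsewhere in the paper.
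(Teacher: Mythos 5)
Your handling of the bottom of the poset (the support/inclusion--exclusion argument identifying the $18$ elements lying in proper standard parabolics, and the elimination of all waved covers) and of the top chain forced by gradedness and the unique maximal element is sound, and matches what the paper does. But your plan for the middle region rests on a misreading of what the Proposition claims, and the statement you propose to prove there is actually false. Figure~\ref{rfcovers} asserts only that the matching edges between consecutive small ranks are \emph{present}; the cross edges between the size-$2$ ranks are deliberately left undetermined --- they are exactly the $14$ ``further potential covers'' $w_1,\ldots,w_{14}$ that the algorithms later branch over. Your claim that any $3$- or $4$-edge configuration between two size-$2$ ranks ``is incompatible with the $H$-triangle'' cannot be correct: the four posets of Figure~\ref{fig:rp12345_2} satisfy Properties~\ref{prop:parabolicsubsystems} through~\ref{prop:antichainsize} and contain such configurations (e.g.\ $47\prec 49$ and $48\prec 49$ simultaneously in the first one). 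So the delicate enumerative bookkeeping you defer to a computer would be an attempt to prove a false statement, and in any case is not needed for the Proposition.

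The argument the paper uses for this region is much lighter. Gradedness together with the uniqueness of the minimal and maximal elements forces every vertex to cover something in the rank below and be covered by something in the rank above, so each bipartite cover graph between consecutive ranks has minimum degree at least~$1$ on both sides; between two size-$2$ ranks this already yields a perfect matching after possibly relabelling the upper rank, which is all the figure asserts. The enumerative input is needed only at ranks $9$--$10$ and $10$--$11$, and only the single fact from Figure~\ref{antinum} that $\simpleSystem$ is the unique $4$-element antichain: it excludes the Hall-violating configurations (e.g.\ if $26$ and $27$ are covered only by $29$, then $\{26,27,30,31\}$ is a forbidden $4$-antichain; similarly for the one bad $3{\times}3$ configuration between ranks $10$ and $11$). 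Finally, you never establish the solid chains $10\prec 13\prec 16\prec 18$, $16\prec 19$, $18\prec 21\prec 24\prec 27$, $19\prec 22\prec 25\prec 28$, which involve full-support elements and so are not covers ``among the $18$ low-support elements''; they do follow from your own setup --- since $18$ has full support, $4\prec 18$, and the only elements of ranks $2$ through $5$ whose support contains the simple root $4$ are $7,10,13,16$, so every saturated chain from $4$ upward is forced through them --- but this step needs to be said, and the analogous matching-up-to-relabelling argument applied to the two full-support threads in ranks $6$ through $9$.
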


\begin{proof}
Most of the covers in Figure~\ref{rfcovers} follow the assumption that $\Gamma$ behaves well with respect to standard parabolic subsystems. Here, we assume the ordering of the simple roots in $\Gamma$ to be the same as shown in the above Coxeter graph.

The following covers have another explanation.
The covers in ranks $19$ to $29$ are necessary to ensure the uniqueness of the minimal and maximal elements. For ranks $12$ to $19$, we can assume for the same reason that, up to reodering of the vertices in the upper rank of two consecutive ranks, that we have at least the following covers for each level:
\begin{center}
\begin{tikzpicture}[baseline=8mm,x=8mm,y=8mm]
  \node[edot](1) at (1,1) {};
  \node[edot](2) at (2,1) {};
  \node[edot](3) at (1,2) {};
  \node[edot](4) at (2,2) {};
  \draw(1)--(3)(2)--(4);
\end{tikzpicture}
\end{center}
Again for the same reason, we have the covers $10 \prec 13 \prec 16 \prec 18 $, and $16 \prec 19$, and as well $18 \prec 21 \prec 24 \prec 27$ and $19 \prec 22 \prec 25 \prec 28$.

There has to be a cover for $26$ and one for $27$. If these are equal, say $29$, and if there is no further such cover, $26,27\not\prec 30,31$, then $\{26,27,30,31\}$ is an antichain with $4$ elements, contradicting the assumption that the set of minimal elements is the unique antichain of size $4$, compare \fref{antinum}.
Hence up to permutation of $29,30,31$ we may assume $26 \prec 29$ and $27 \prec 30$.

Now we come to ranks $10$ and $11$: we have a cover for each of $29,30,31$ and each of $32,33,34$ is a cover. Up to permutation of $32,33,34$, we have at least
\begin{center}
\begin{tikzpicture}[baseline=8mm,x=8mm,y=8mm]
  \node[edot](1) at (1,1) {};
  \node[edot](2) at (2,1) {};
  \node[edot](3) at (3,1) {};
  \node[edot](4) at (1,2) {};
  \node[edot](5) at (2,2) {};
  \node[edot](6) at (3,2) {};
  \draw(1)--(4)(2)--(5)(3)--(6);
\end{tikzpicture}
\quad or \quad
\begin{tikzpicture}[baseline=8mm,x=8mm,y=8mm]
  \node[edot](1) at (1,1) {};
  \node[edot](2) at (2,1) {};
  \node[edot](3) at (3,1) {};
  \node[edot](4) at (1,2) {};
  \node[edot](5) at (2,2) {};
  \node[edot](6) at (3,2) {};
  \draw(1)--(4)(2)--(4)(3)--(5)(3)--(6);
\end{tikzpicture}
.
\end{center}
But to avoid again an antichain with $4$ elements in the second case, we need to add a cover. Adding one further cover always yields a situation which may be permuted to include the first case, except for
\begin{center}
\begin{tikzpicture}[baseline=8mm,x=8mm,y=8mm]
  \node[edot](1) at (1,1) {};
  \node[edot](2) at (2,1) {};
  \node[edot](3) at (3,1) {};
  \node[edot](4) at (1,2) {};
  \node[edot](5) at (2,2) {};
  \node[edot](6) at (3,2) {};
  \draw(1)--(4)(2)--(4)(3)--(4)(3)--(5)(3)--(6);
\end{tikzpicture}
\end{center}
which again has an antichain with $4$ elements. Thus we may assume the covers $29 \prec 32$, $30 \prec 33$, $31 \prec 34$.
\end{proof}

\subsection{Assuming as well Property~\ref{prop:antichainorbitsize}}
\label{sec:H4algorithms}

In this section, we moreover assume that $\Gamma$ satisfies Property~\ref{prop:antichainorbitsize}.
First, observe that we can predict two further relations in \fref{rfcovers}, namely the covers $9 \prec 13$ and the noncover $8 \not\prec 13$. Assume that $8 \prec 13$. Then under the Panyushev action we have the orbit
\[
  \{8\}, \{3,4,11\}, \{6,7\}, \{1,10\}, \{5\}, \{3,4,8\}, \{6,7,11\}, \{9,10\},
\]
independently of the choice whether we have the cover $9 \prec 13$ or not. Since it has length $8$, this would contradict Property~\ref{prop:antichainorbitsize}. On the other hand, removing the vertex $1$ should result in a poset of type $A_3$, which is now only possible if $9 \prec 13$.

\medskip

We present two algorithms to prove the nonexistence of a poset satisfying Properties~\ref{prop:parabolicsubsystems} through~\ref{prop:antichainsize} and also Property~\ref{prop:antichainorbitsize}.
Both variants need functions to count or to compute antichains of a given poset, and a function to compute the lengths of the orbits under the Panyushev action.

\begin{remar}
We did not use computer algebra systems or similar scripting languages for several reasons.
Our computations do not require higher functions or libraries, and the occurring arithmetic is always with small numbers. Further, since our posets have at most $60$ elements, we can for instance store the information of an antichain or of an order ideal in a very compact $64$-bit variable. Last but not least, a compiled and optimized version always performs much faster than interpreted code. This can be decisive if the runtime is in the range of a few hours to a few days.
\end{remar}

\subsubsection{The first algorithm}

There are $37$ potential covers $i\prec j$ with $i,j\le 36$ that may be added to Figure~\ref{rfcovers} together with the cover $9 \prec 13$ and the noncover $8 \prec 13$. Let us call them $c_1,\ldots,c_{37}$. Then there are $14$ further potential covers in the upper part, say $w_1,\ldots,w_{14}$.
We divide the algorithm into two parts.

\subsubsection*{The upper part} First we compute all $2^{14}$ possible posets on the vertices $35,\ldots,60$ and store those which do not contain an entire orbit under the Panyushev action of length other than $2$,$3$,$5$, or $30$ in a list $R$. For each element $\Gamma_0$ of $R$, we also compute the number $a_2(\Gamma_0)$ of antichains with two elements, the number $g_{35}(\Gamma_0)$ of elements not greater or equal than $35$, and the number $g_{36}(\Gamma_0)$ of elements not greater or equal than $36$.

\subsubsection*{The lower part} For the covers between vertices in $1,\ldots,36$, we use the following recursion.

\algo{EnumeratePosets1}{$\Gamma'$,$p$}{Enumerates all root posets for $H_4$ satisfying Properties~\ref{prop:parabolicsubsystems} through~\ref{prop:antichainorbitsize}, starting from~$\Gamma'$}
{A poset $\Gamma'$, an index $p$ for the next potential cover.}
{Posets based upon $\Gamma'$ satisfying Properties~\ref{prop:parabolicsubsystems} through~\ref{prop:antichainorbitsize}.}
{
\item If $p=38$ then:
\begin{enumerate}
\item Count the antichains with $3$ elements within $1,\ldots,36$. If this number is greater than $60$, then return $0$.
\item Compute all orbits under the Panyushev action which are entirely contained in $1,\ldots,36$. As soon as one of them has length other than $2$,$3$,$5$, or $30$, return $0$.
\item Within $5,\ldots,36$: Count the number $b_2$ of antichains with $2$ elements, the number $l_{35}$ of elements not less or equal to $35$, and the number $l_{36}$ of elements not less or equal to $36$.
\item For each $\Gamma_0\in R$ with $a_2(\Gamma_0)+b_2+l_{35}\cdot g_{36}(\Gamma_0)+l_{36}\cdot g_{35}(\Gamma_0) = 133$, compute the poset $\Gamma$ as a combination of $\Gamma'$ and $\Gamma_0$. If $\Gamma$ satisfies all $5$ properties, then print $\Gamma$ to an output file.
\end{enumerate}
\item If all chosen covers between degree $4$ and $5$ have been included, then check that $\{14,15,18,19\}$ is not an antichain with $4$ elements and return $0$ if it is.
\item If all chosen covers between degree $8$ and $9$ have been included, then check that $27\prec 29$ or $28\prec 29$ and that $27\prec 31$ or $28\prec 31$ and return $0$ if not. Further check that $\{26,\ldots,31\}$ does not contain an antichain with $4$ elements.
\item If all chosen covers between degree $10$ and $11$ have been included, then check that each of $35,36$ is a cover and that each of $32,33,34$ is covered. Otherwise return $0$.\label{step_23}
\item Call {\tt EnumeratePosets1}($\Gamma'$,$p+1$).
\item Include the cover $c_p$ to $\Gamma'$ to a new poset $\Gamma''$.
\item Call {\tt EnumeratePosets1}($\Gamma''$,$p+1$).
}
We use the data of Figure~\ref{rfcovers} to compute an initial poset $\Gamma'$ for the first call {\tt EnumeratePosets1}($\Gamma'$,$1$).

\begin{remar}
It is easy to parallelize Algorithm~\ref{EnumeratePosets1}.
For instance, call $c_1,\ldots,c_6$ the potential covers within $32,\ldots,36$.
Now one can fork (duplicate the program) for each of the $25$ possible choices for $c_1,\ldots,c_6$ (some of the $2^6$ choices are excluded by Step~\ref{step_23}).
\end{remar}

\subsubsection{Results}

Algorithm~\ref{EnumeratePosets1} terminates after approximately two days of CPU-time and does not find any poset satisfying all the above conditions.
However, we slightly modified the algorithm, and found over 80 billion posets only satisfying Properties~\ref{prop:parabolicsubsystems} through~\ref{prop:antichainsize}. We guess that there are several 100 billions of them.
We show four examples of such posets in \fref{fig:rp12345_2}.
Property~\ref{prop:antichainorbitsize} is much more restrictive and turns out to be the best break condition for the algorithm.

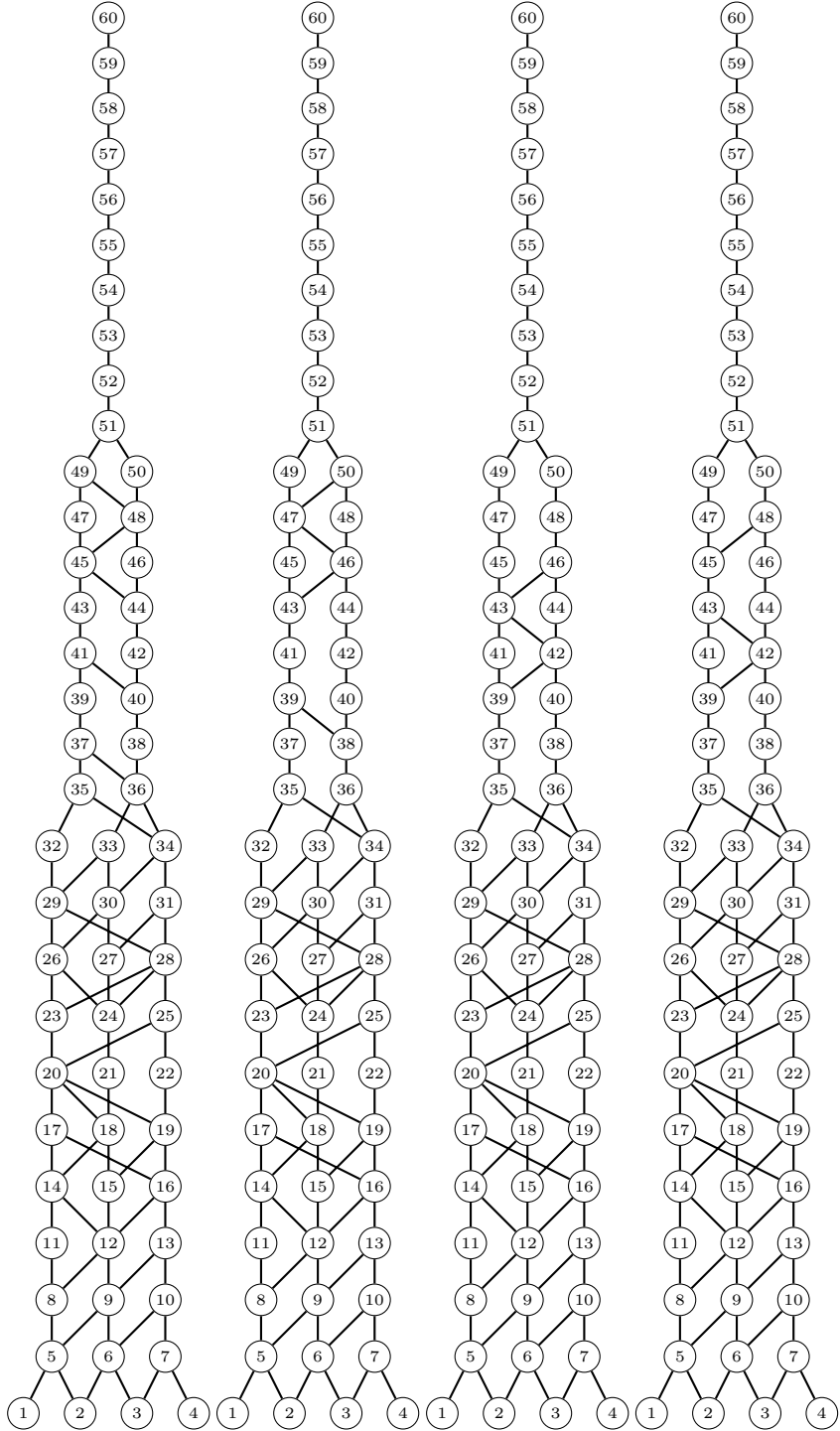
\begin{figure}
{\tiny
\begin{tikzpicture}[scale=.95,baseline=15mm,x=4mm,y=8mm]
\node[ndot](1) at (1,1) {1};
\node[ndot](2) at (3,1) {2};
\node[ndot](3) at (5,1) {3};
\node[ndot](4) at (7,1) {4};

\node[ndot](5) at (2,2) {5};
\node[ndot](6) at (4,2) {6};
\node[ndot](7) at (6,2) {7};
\node[ndot](8) at (2,3) {8};
\node[ndot](9) at (4,3) {9};
\node[ndot](10) at (6,3) {10};
\node[ndot](11) at (2,4) {11};
\node[ndot](12) at (4,4) {12};
\node[ndot](13) at (6,4) {13};
\node[ndot](14) at (2,5) {14};
\node[ndot](15) at (4,5) {15};
\node[ndot](16) at (6,5) {16};
\node[ndot](17) at (2,6) {17};
\node[ndot](18) at (4,6) {18};
\node[ndot](19) at (6,6) {19};
\node[ndot](20) at (2,7) {20};
\node[ndot](21) at (4,7) {21};
\node[ndot](22) at (6,7) {22};
\node[ndot](23) at (2,8) {23};
\node[ndot](24) at (4,8) {24};
\node[ndot](25) at (6,8) {25};
\node[ndot](26) at (2,9) {26};
\node[ndot](27) at (4,9) {27};
\node[ndot](28) at (6,9) {28};
\node[ndot](29) at (2,10) {29};
\node[ndot](30) at (4,10) {30};
\node[ndot](31) at (6,10) {31};
\node[ndot](32) at (2,11) {32};
\node[ndot](33) at (4,11) {33};
\node[ndot](34) at (6,11) {34};

\node[ndot](35) at (3,12) {35};
\node[ndot](36) at (5,12) {36};
\node[ndot](37) at (3,12.8) {37};
\node[ndot](38) at (5,12.8) {38};
\node[ndot](39) at (3,13.6) {39};
\node[ndot](40) at (5,13.6) {40};
\node[ndot](41) at (3,14.4) {41};
\node[ndot](42) at (5,14.4) {42};
\node[ndot](43) at (3,15.2) {43};
\node[ndot](44) at (5,15.2) {44};
\node[ndot](45) at (3,16) {45};
\node[ndot](46) at (5,16) {46};
\node[ndot](47) at (3,16.8) {47};
\node[ndot](48) at (5,16.8) {48};
\node[ndot](49) at (3,17.6) {49};
\node[ndot](50) at (5,17.6) {50};

\node[ndot](51) at (4,18.4) {51};
\node[ndot](52) at (4,19.2) {52};
\node[ndot](53) at (4,20) {53};
\node[ndot](54) at (4,20.8) {54};
\node[ndot](55) at (4,21.6) {55};
\node[ndot](56) at (4,22.4) {56};
\node[ndot](57) at (4,23.2) {57};
\node[ndot](58) at (4,24) {58};
\node[ndot](59) at (4,24.8) {59};
\node[ndot](60) at (4,25.6) {60};

\begin{scope}[line width=0.8pt]
\draw (60)--(59)(59)--(58)(58)--(57)(57)--(56)(56)--(55)(55)--(54)(54)--(53)(53)--(52)(52)--(51)(51)--(50)(51)--(49)(50)--(48)(49)--(48)(49)--(47)(48)--(46)(48)--(45)(47)--(45)(46)--(44)(45)--(44)(45)--(43)(44)--(42)(43)--(41)(42)--(40)(41)--(40)(41)--(39)(40)--(38)(39)--(37)(38)--(36)(37)--(36)(37)--(35)(36)--(34)(36)--(33)(35)--(34)(35)--(32)(34)--(31)(34)--(30)(33)--(30)(33)--(29)(32)--(29)(31)--(28)(31)--(27)(30)--(27)(30)--(26)(29)--(28)(29)--(26)(28)--(25)(28)--(24)(28)--(23)(27)--(24)(26)--(24)(26)--(23)(25)--(22)(25)--(20)(24)--(21)(23)--(20)(22)--(19)(21)--(18)(20)--(19)(20)--(18)(20)--(17)(19)--(16)(19)--(15)(18)--(15)(18)--(14)(17)--(16)(17)--(14)(16)--(13)(16)--(12)(15)--(12)(14)--(12)(14)--(11)(13)--(10)(13)--(9)(12)--(9)(12)--(8)(11)--(8)(10)--(7)(10)--(6)(9)--(6)(9)--(5)(8)--(5)(7)--(4)(7)--(3)(6)--(3)(6)--(2)(5)--(2)(5)--(1);
\end{scope}

\end{tikzpicture}
\begin{tikzpicture}[scale=.95,baseline=15mm,x=4mm,y=8mm]
\node[ndot](1) at (1,1) {1};
\node[ndot](2) at (3,1) {2};
\node[ndot](3) at (5,1) {3};
\node[ndot](4) at (7,1) {4};

\node[ndot](5) at (2,2) {5};
\node[ndot](6) at (4,2) {6};
\node[ndot](7) at (6,2) {7};
\node[ndot](8) at (2,3) {8};
\node[ndot](9) at (4,3) {9};
\node[ndot](10) at (6,3) {10};
\node[ndot](11) at (2,4) {11};
\node[ndot](12) at (4,4) {12};
\node[ndot](13) at (6,4) {13};
\node[ndot](14) at (2,5) {14};
\node[ndot](15) at (4,5) {15};
\node[ndot](16) at (6,5) {16};
\node[ndot](17) at (2,6) {17};
\node[ndot](18) at (4,6) {18};
\node[ndot](19) at (6,6) {19};
\node[ndot](20) at (2,7) {20};
\node[ndot](21) at (4,7) {21};
\node[ndot](22) at (6,7) {22};
\node[ndot](23) at (2,8) {23};
\node[ndot](24) at (4,8) {24};
\node[ndot](25) at (6,8) {25};
\node[ndot](26) at (2,9) {26};
\node[ndot](27) at (4,9) {27};
\node[ndot](28) at (6,9) {28};
\node[ndot](29) at (2,10) {29};
\node[ndot](30) at (4,10) {30};
\node[ndot](31) at (6,10) {31};
\node[ndot](32) at (2,11) {32};
\node[ndot](33) at (4,11) {33};
\node[ndot](34) at (6,11) {34};

\node[ndot](35) at (3,12) {35};
\node[ndot](36) at (5,12) {36};
\node[ndot](37) at (3,12.8) {37};
\node[ndot](38) at (5,12.8) {38};
\node[ndot](39) at (3,13.6) {39};
\node[ndot](40) at (5,13.6) {40};
\node[ndot](41) at (3,14.4) {41};
\node[ndot](42) at (5,14.4) {42};
\node[ndot](43) at (3,15.2) {43};
\node[ndot](44) at (5,15.2) {44};
\node[ndot](45) at (3,16) {45};
\node[ndot](46) at (5,16) {46};
\node[ndot](47) at (3,16.8) {47};
\node[ndot](48) at (5,16.8) {48};
\node[ndot](49) at (3,17.6) {49};
\node[ndot](50) at (5,17.6) {50};

\node[ndot](51) at (4,18.4) {51};
\node[ndot](52) at (4,19.2) {52};
\node[ndot](53) at (4,20) {53};
\node[ndot](54) at (4,20.8) {54};
\node[ndot](55) at (4,21.6) {55};
\node[ndot](56) at (4,22.4) {56};
\node[ndot](57) at (4,23.2) {57};
\node[ndot](58) at (4,24) {58};
\node[ndot](59) at (4,24.8) {59};
\node[ndot](60) at (4,25.6) {60};

\begin{scope}[line width=0.8pt]
\draw (60)--(59)(59)--(58)(58)--(57)(57)--(56)(56)--(55)(55)--(54)(54)--(53)(53)--(52)(52)--(51)(51)--(50)(51)--(49)(50)--(48)(50)--(47)(49)--(47)(48)--(46)(47)--(46)(47)--(45)(46)--(44)(46)--(43)(45)--(43)(44)--(42)(43)--(41)(42)--(40)(41)--(39)(40)--(38)(39)--(38)(39)--(37)(38)--(36)(37)--(35)(36)--(34)(36)--(33)(35)--(34)(35)--(32)(34)--(31)(34)--(30)(33)--(30)(33)--(29)(32)--(29)(31)--(28)(31)--(27)(30)--(27)(30)--(26)(29)--(28)(29)--(26)(28)--(25)(28)--(24)(28)--(23)(27)--(24)(26)--(24)(26)--(23)(25)--(22)(25)--(20)(24)--(21)(23)--(20)(22)--(19)(21)--(18)(20)--(19)(20)--(18)(20)--(17)(19)--(16)(19)--(15)(18)--(15)(18)--(14)(17)--(16)(17)--(14)(16)--(13)(16)--(12)(15)--(12)(14)--(12)(14)--(11)(13)--(10)(13)--(9)(12)--(9)(12)--(8)(11)--(8)(10)--(7)(10)--(6)(9)--(6)(9)--(5)(8)--(5)(7)--(4)(7)--(3)(6)--(3)(6)--(2)(5)--(2)(5)--(1);
\end{scope}

\end{tikzpicture}
\begin{tikzpicture}[scale=.95,baseline=15mm,x=4mm,y=8mm]
\node[ndot](1) at (1,1) {1};
\node[ndot](2) at (3,1) {2};
\node[ndot](3) at (5,1) {3};
\node[ndot](4) at (7,1) {4};

\node[ndot](5) at (2,2) {5};
\node[ndot](6) at (4,2) {6};
\node[ndot](7) at (6,2) {7};
\node[ndot](8) at (2,3) {8};
\node[ndot](9) at (4,3) {9};
\node[ndot](10) at (6,3) {10};
\node[ndot](11) at (2,4) {11};
\node[ndot](12) at (4,4) {12};
\node[ndot](13) at (6,4) {13};
\node[ndot](14) at (2,5) {14};
\node[ndot](15) at (4,5) {15};
\node[ndot](16) at (6,5) {16};
\node[ndot](17) at (2,6) {17};
\node[ndot](18) at (4,6) {18};
\node[ndot](19) at (6,6) {19};
\node[ndot](20) at (2,7) {20};
\node[ndot](21) at (4,7) {21};
\node[ndot](22) at (6,7) {22};
\node[ndot](23) at (2,8) {23};
\node[ndot](24) at (4,8) {24};
\node[ndot](25) at (6,8) {25};
\node[ndot](26) at (2,9) {26};
\node[ndot](27) at (4,9) {27};
\node[ndot](28) at (6,9) {28};
\node[ndot](29) at (2,10) {29};
\node[ndot](30) at (4,10) {30};
\node[ndot](31) at (6,10) {31};
\node[ndot](32) at (2,11) {32};
\node[ndot](33) at (4,11) {33};
\node[ndot](34) at (6,11) {34};

\node[ndot](35) at (3,12) {35};
\node[ndot](36) at (5,12) {36};
\node[ndot](37) at (3,12.8) {37};
\node[ndot](38) at (5,12.8) {38};
\node[ndot](39) at (3,13.6) {39};
\node[ndot](40) at (5,13.6) {40};
\node[ndot](41) at (3,14.4) {41};
\node[ndot](42) at (5,14.4) {42};
\node[ndot](43) at (3,15.2) {43};
\node[ndot](44) at (5,15.2) {44};
\node[ndot](45) at (3,16) {45};
\node[ndot](46) at (5,16) {46};
\node[ndot](47) at (3,16.8) {47};
\node[ndot](48) at (5,16.8) {48};
\node[ndot](49) at (3,17.6) {49};
\node[ndot](50) at (5,17.6) {50};

\node[ndot](51) at (4,18.4) {51};
\node[ndot](52) at (4,19.2) {52};
\node[ndot](53) at (4,20) {53};
\node[ndot](54) at (4,20.8) {54};
\node[ndot](55) at (4,21.6) {55};
\node[ndot](56) at (4,22.4) {56};
\node[ndot](57) at (4,23.2) {57};
\node[ndot](58) at (4,24) {58};
\node[ndot](59) at (4,24.8) {59};
\node[ndot](60) at (4,25.6) {60};

\begin{scope}[line width=0.8pt]
\draw (60)--(59)(59)--(58)(58)--(57)(57)--(56)(56)--(55)(55)--(54)(54)--(53)(53)--(52)(52)--(51)(51)--(50)(51)--(49)(50)--(48)(49)--(47)(48)--(46)(47)--(45)(46)--(44)(46)--(43)(45)--(43)(44)--(42)(43)--(42)(43)--(41)(42)--(40)(42)--(39)(41)--(39)(40)--(38)(39)--(37)(38)--(36)(37)--(35)(36)--(34)(36)--(33)(35)--(34)(35)--(32)(34)--(31)(34)--(30)(33)--(30)(33)--(29)(32)--(29)(31)--(28)(31)--(27)(30)--(27)(30)--(26)(29)--(28)(29)--(26)(28)--(25)(28)--(24)(28)--(23)(27)--(24)(26)--(24)(26)--(23)(25)--(22)(25)--(20)(24)--(21)(23)--(20)(22)--(19)(21)--(18)(20)--(19)(20)--(18)(20)--(17)(19)--(16)(19)--(15)(18)--(15)(18)--(14)(17)--(16)(17)--(14)(16)--(13)(16)--(12)(15)--(12)(14)--(12)(14)--(11)(13)--(10)(13)--(9)(12)--(9)(12)--(8)(11)--(8)(10)--(7)(10)--(6)(9)--(6)(9)--(5)(8)--(5)(7)--(4)(7)--(3)(6)--(3)(6)--(2)(5)--(2)(5)--(1);
\end{scope}

\end{tikzpicture}
\begin{tikzpicture}[scale=.95,baseline=15mm,x=4mm,y=8mm]
\node[ndot](1) at (1,1) {1};
\node[ndot](2) at (3,1) {2};
\node[ndot](3) at (5,1) {3};
\node[ndot](4) at (7,1) {4};

\node[ndot](5) at (2,2) {5};
\node[ndot](6) at (4,2) {6};
\node[ndot](7) at (6,2) {7};
\node[ndot](8) at (2,3) {8};
\node[ndot](9) at (4,3) {9};
\node[ndot](10) at (6,3) {10};
\node[ndot](11) at (2,4) {11};
\node[ndot](12) at (4,4) {12};
\node[ndot](13) at (6,4) {13};
\node[ndot](14) at (2,5) {14};
\node[ndot](15) at (4,5) {15};
\node[ndot](16) at (6,5) {16};
\node[ndot](17) at (2,6) {17};
\node[ndot](18) at (4,6) {18};
\node[ndot](19) at (6,6) {19};
\node[ndot](20) at (2,7) {20};
\node[ndot](21) at (4,7) {21};
\node[ndot](22) at (6,7) {22};
\node[ndot](23) at (2,8) {23};
\node[ndot](24) at (4,8) {24};
\node[ndot](25) at (6,8) {25};
\node[ndot](26) at (2,9) {26};
\node[ndot](27) at (4,9) {27};
\node[ndot](28) at (6,9) {28};
\node[ndot](29) at (2,10) {29};
\node[ndot](30) at (4,10) {30};
\node[ndot](31) at (6,10) {31};
\node[ndot](32) at (2,11) {32};
\node[ndot](33) at (4,11) {33};
\node[ndot](34) at (6,11) {34};

\node[ndot](35) at (3,12) {35};
\node[ndot](36) at (5,12) {36};
\node[ndot](37) at (3,12.8) {37};
\node[ndot](38) at (5,12.8) {38};
\node[ndot](39) at (3,13.6) {39};
\node[ndot](40) at (5,13.6) {40};
\node[ndot](41) at (3,14.4) {41};
\node[ndot](42) at (5,14.4) {42};
\node[ndot](43) at (3,15.2) {43};
\node[ndot](44) at (5,15.2) {44};
\node[ndot](45) at (3,16) {45};
\node[ndot](46) at (5,16) {46};
\node[ndot](47) at (3,16.8) {47};
\node[ndot](48) at (5,16.8) {48};
\node[ndot](49) at (3,17.6) {49};
\node[ndot](50) at (5,17.6) {50};

\node[ndot](51) at (4,18.4) {51};
\node[ndot](52) at (4,19.2) {52};
\node[ndot](53) at (4,20) {53};
\node[ndot](54) at (4,20.8) {54};
\node[ndot](55) at (4,21.6) {55};
\node[ndot](56) at (4,22.4) {56};
\node[ndot](57) at (4,23.2) {57};
\node[ndot](58) at (4,24) {58};
\node[ndot](59) at (4,24.8) {59};
\node[ndot](60) at (4,25.6) {60};

\begin{scope}[line width=0.8pt]
\draw (60)--(59)(59)--(58)(58)--(57)(57)--(56)(56)--(55)(55)--(54)(54)--(53)(53)--(52)(52)--(51)(51)--(50)(51)--(49)(50)--(48)(49)--(47)(48)--(46)(48)--(45)(47)--(45)(46)--(44)(45)--(43)(44)--(42)(43)--(42)(43)--(41)(42)--(40)(42)--(39)(41)--(39)(40)--(38)(39)--(37)(38)--(36)(37)--(35)(36)--(34)(36)--(33)(35)--(34)(35)--(32)(34)--(31)(34)--(30)(33)--(30)(33)--(29)(32)--(29)(31)--(28)(31)--(27)(30)--(27)(30)--(26)(29)--(28)(29)--(26)(28)--(25)(28)--(24)(28)--(23)(27)--(24)(26)--(24)(26)--(23)(25)--(22)(25)--(20)(24)--(21)(23)--(20)(22)--(19)(21)--(18)(20)--(19)(20)--(18)(20)--(17)(19)--(16)(19)--(15)(18)--(15)(18)--(14)(17)--(16)(17)--(14)(16)--(13)(16)--(12)(15)--(12)(14)--(12)(14)--(11)(13)--(10)(13)--(9)(12)--(9)(12)--(8)(11)--(8)(10)--(7)(10)--(6)(9)--(6)(9)--(5)(8)--(5)(7)--(4)(7)--(3)(6)--(3)(6)--(2)(5)--(2)(5)--(1);
\end{scope}

\end{tikzpicture}
}
\caption{All root posets for $H_4$ satisfying the Properties~\ref{prop:parabolicsubsystems} through~\ref{prop:antichainsize}, and Property~\ref{prop:antichainorbitsize}(b).\label{fig:rp12345_2}}
\end{figure}

\subsubsection{The second algorithm}

Since it is very difficult to ensure that an implementation of the above algorithm is free of mistakes, we propose in this section a second slightly different algorithm, and thereby increasing our changes of providing valid results.
Further, this second algorithm is much better in the sense that it takes only a few hours of CPU-time and performs a more general search, namely for posets satisfying all the properties as in the previous sections, except that we do not take the structure of standard parabolic subposets into account.
For each pair of consecutive ranks we get a set of possible configurations of covers.
For example, there are $51$ possible configurations of covers between two ranks containing 3 vertices, up to permutations of the vertices of larger rank:
\vspace{10pt}
\begin{center}
\begin{tikzpicture}[baseline=30pt,x=7pt,y=7pt]
\node[cdot](1) at (0,7) {};\node[cdot](2) at (1,7) {};\node[cdot](3) at (2,7) {};\node[cdot](4) at (0,6) {};\node[cdot](5) at (1,6) {};\node[cdot](6) at (2,6) {};
\node[cdot](7) at (3,7) {};\node[cdot](8) at (4,7) {};\node[cdot](9) at (5,7) {};\node[cdot](10) at (3,6) {};\node[cdot](11) at (4,6) {};\node[cdot](12) at (5,6) {};
\node[cdot](13) at (6,7) {};\node[cdot](14) at (7,7) {};\node[cdot](15) at (8,7) {};\node[cdot](16) at (6,6) {};\node[cdot](17) at (7,6) {};\node[cdot](18) at (8,6) {};
\node[cdot](19) at (9,7) {};\node[cdot](20) at (10,7) {};\node[cdot](21) at (11,7) {};\node[cdot](22) at (9,6) {};\node[cdot](23) at (10,6) {};\node[cdot](24) at (11,6) {};
\node[cdot](25) at (12,7) {};\node[cdot](26) at (13,7) {};\node[cdot](27) at (14,7) {};\node[cdot](28) at (12,6) {};\node[cdot](29) at (13,6) {};\node[cdot](30) at (14,6) {};
\node[cdot](31) at (15,7) {};\node[cdot](32) at (16,7) {};\node[cdot](33) at (17,7) {};\node[cdot](34) at (15,6) {};\node[cdot](35) at (16,6) {};\node[cdot](36) at (17,6) {};
\node[cdot](37) at (18,7) {};\node[cdot](38) at (19,7) {};\node[cdot](39) at (20,7) {};\node[cdot](40) at (18,6) {};\node[cdot](41) at (19,6) {};\node[cdot](42) at (20,6) {};
\node[cdot](43) at (21,7) {};\node[cdot](44) at (22,7) {};\node[cdot](45) at (23,7) {};\node[cdot](46) at (21,6) {};\node[cdot](47) at (22,6) {};\node[cdot](48) at (23,6) {};
\node[cdot](49) at (24,7) {};\node[cdot](50) at (25,7) {};\node[cdot](51) at (26,7) {};\node[cdot](52) at (24,6) {};\node[cdot](53) at (25,6) {};\node[cdot](54) at (26,6) {};
\node[cdot](55) at (27,7) {};\node[cdot](56) at (28,7) {};\node[cdot](57) at (29,7) {};\node[cdot](58) at (27,6) {};\node[cdot](59) at (28,6) {};\node[cdot](60) at (29,6) {};
\node[cdot](61) at (30,7) {};\node[cdot](62) at (31,7) {};\node[cdot](63) at (32,7) {};\node[cdot](64) at (30,6) {};\node[cdot](65) at (31,6) {};\node[cdot](66) at (32,6) {};
\node[cdot](67) at (33,7) {};\node[cdot](68) at (34,7) {};\node[cdot](69) at (35,7) {};\node[cdot](70) at (33,6) {};\node[cdot](71) at (34,6) {};\node[cdot](72) at (35,6) {};
\node[cdot](73) at (36,7) {};\node[cdot](74) at (37,7) {};\node[cdot](75) at (38,7) {};\node[cdot](76) at (36,6) {};\node[cdot](77) at (37,6) {};\node[cdot](78) at (38,6) {};
\node[cdot](79) at (39,7) {};\node[cdot](80) at (40,7) {};\node[cdot](81) at (41,7) {};\node[cdot](82) at (39,6) {};\node[cdot](83) at (40,6) {};\node[cdot](84) at (41,6) {};
\node[cdot](85) at (42,7) {};\node[cdot](86) at (43,7) {};\node[cdot](87) at (44,7) {};\node[cdot](88) at (42,6) {};\node[cdot](89) at (43,6) {};\node[cdot](90) at (44,6) {};
\node[cdot](91) at (45,7) {};\node[cdot](92) at (46,7) {};\node[cdot](93) at (47,7) {};\node[cdot](94) at (45,6) {};\node[cdot](95) at (46,6) {};\node[cdot](96) at (47,6) {};
\node[cdot](97) at (48,7) {};\node[cdot](98) at (49,7) {};\node[cdot](99) at (50,7) {};\node[cdot](100) at (48,6) {};\node[cdot](101) at (49,6) {};\node[cdot](102) at (50,6) {};
\node[cdot](103) at (0,5) {};\node[cdot](104) at (1,5) {};\node[cdot](105) at (2,5) {};\node[cdot](106) at (0,4) {};\node[cdot](107) at (1,4) {};\node[cdot](108) at (2,4) {};
\node[cdot](109) at (3,5) {};\node[cdot](110) at (4,5) {};\node[cdot](111) at (5,5) {};\node[cdot](112) at (3,4) {};\node[cdot](113) at (4,4) {};\node[cdot](114) at (5,4) {};
\node[cdot](115) at (6,5) {};\node[cdot](116) at (7,5) {};\node[cdot](117) at (8,5) {};\node[cdot](118) at (6,4) {};\node[cdot](119) at (7,4) {};\node[cdot](120) at (8,4) {};
\node[cdot](121) at (9,5) {};\node[cdot](122) at (10,5) {};\node[cdot](123) at (11,5) {};\node[cdot](124) at (9,4) {};\node[cdot](125) at (10,4) {};\node[cdot](126) at (11,4) {};
\node[cdot](127) at (12,5) {};\node[cdot](128) at (13,5) {};\node[cdot](129) at (14,5) {};\node[cdot](130) at (12,4) {};\node[cdot](131) at (13,4) {};\node[cdot](132) at (14,4) {};
\node[cdot](133) at (15,5) {};\node[cdot](134) at (16,5) {};\node[cdot](135) at (17,5) {};\node[cdot](136) at (15,4) {};\node[cdot](137) at (16,4) {};\node[cdot](138) at (17,4) {};
\node[cdot](139) at (18,5) {};\node[cdot](140) at (19,5) {};\node[cdot](141) at (20,5) {};\node[cdot](142) at (18,4) {};\node[cdot](143) at (19,4) {};\node[cdot](144) at (20,4) {};
\node[cdot](145) at (21,5) {};\node[cdot](146) at (22,5) {};\node[cdot](147) at (23,5) {};\node[cdot](148) at (21,4) {};\node[cdot](149) at (22,4) {};\node[cdot](150) at (23,4) {};
\node[cdot](151) at (24,5) {};\node[cdot](152) at (25,5) {};\node[cdot](153) at (26,5) {};\node[cdot](154) at (24,4) {};\node[cdot](155) at (25,4) {};\node[cdot](156) at (26,4) {};
\node[cdot](157) at (27,5) {};\node[cdot](158) at (28,5) {};\node[cdot](159) at (29,5) {};\node[cdot](160) at (27,4) {};\node[cdot](161) at (28,4) {};\node[cdot](162) at (29,4) {};
\node[cdot](163) at (30,5) {};\node[cdot](164) at (31,5) {};\node[cdot](165) at (32,5) {};\node[cdot](166) at (30,4) {};\node[cdot](167) at (31,4) {};\node[cdot](168) at (32,4) {};
\node[cdot](169) at (33,5) {};\node[cdot](170) at (34,5) {};\node[cdot](171) at (35,5) {};\node[cdot](172) at (33,4) {};\node[cdot](173) at (34,4) {};\node[cdot](174) at (35,4) {};
\node[cdot](175) at (36,5) {};\node[cdot](176) at (37,5) {};\node[cdot](177) at (38,5) {};\node[cdot](178) at (36,4) {};\node[cdot](179) at (37,4) {};\node[cdot](180) at (38,4) {};
\node[cdot](181) at (39,5) {};\node[cdot](182) at (40,5) {};\node[cdot](183) at (41,5) {};\node[cdot](184) at (39,4) {};\node[cdot](185) at (40,4) {};\node[cdot](186) at (41,4) {};
\node[cdot](187) at (42,5) {};\node[cdot](188) at (43,5) {};\node[cdot](189) at (44,5) {};\node[cdot](190) at (42,4) {};\node[cdot](191) at (43,4) {};\node[cdot](192) at (44,4) {};
\node[cdot](193) at (45,5) {};\node[cdot](194) at (46,5) {};\node[cdot](195) at (47,5) {};\node[cdot](196) at (45,4) {};\node[cdot](197) at (46,4) {};\node[cdot](198) at (47,4) {};
\node[cdot](199) at (48,5) {};\node[cdot](200) at (49,5) {};\node[cdot](201) at (50,5) {};\node[cdot](202) at (48,4) {};\node[cdot](203) at (49,4) {};\node[cdot](204) at (50,4) {};
\node[cdot](205) at (0,3) {};\node[cdot](206) at (1,3) {};\node[cdot](207) at (2,3) {};\node[cdot](208) at (0,2) {};\node[cdot](209) at (1,2) {};\node[cdot](210) at (2,2) {};
\node[cdot](211) at (3,3) {};\node[cdot](212) at (4,3) {};\node[cdot](213) at (5,3) {};\node[cdot](214) at (3,2) {};\node[cdot](215) at (4,2) {};\node[cdot](216) at (5,2) {};
\node[cdot](217) at (6,3) {};\node[cdot](218) at (7,3) {};\node[cdot](219) at (8,3) {};\node[cdot](220) at (6,2) {};\node[cdot](221) at (7,2) {};\node[cdot](222) at (8,2) {};
\node[cdot](223) at (9,3) {};\node[cdot](224) at (10,3) {};\node[cdot](225) at (11,3) {};\node[cdot](226) at (9,2) {};\node[cdot](227) at (10,2) {};\node[cdot](228) at (11,2) {};
\node[cdot](229) at (12,3) {};\node[cdot](230) at (13,3) {};\node[cdot](231) at (14,3) {};\node[cdot](232) at (12,2) {};\node[cdot](233) at (13,2) {};\node[cdot](234) at (14,2) {};
\node[cdot](235) at (15,3) {};\node[cdot](236) at (16,3) {};\node[cdot](237) at (17,3) {};\node[cdot](238) at (15,2) {};\node[cdot](239) at (16,2) {};\node[cdot](240) at (17,2) {};
\node[cdot](241) at (18,3) {};\node[cdot](242) at (19,3) {};\node[cdot](243) at (20,3) {};\node[cdot](244) at (18,2) {};\node[cdot](245) at (19,2) {};\node[cdot](246) at (20,2) {};
\node[cdot](247) at (21,3) {};\node[cdot](248) at (22,3) {};\node[cdot](249) at (23,3) {};\node[cdot](250) at (21,2) {};\node[cdot](251) at (22,2) {};\node[cdot](252) at (23,2) {};
\node[cdot](253) at (24,3) {};\node[cdot](254) at (25,3) {};\node[cdot](255) at (26,3) {};\node[cdot](256) at (24,2) {};\node[cdot](257) at (25,2) {};\node[cdot](258) at (26,2) {};
\node[cdot](259) at (27,3) {};\node[cdot](260) at (28,3) {};\node[cdot](261) at (29,3) {};\node[cdot](262) at (27,2) {};\node[cdot](263) at (28,2) {};\node[cdot](264) at (29,2) {};
\node[cdot](265) at (30,3) {};\node[cdot](266) at (31,3) {};\node[cdot](267) at (32,3) {};\node[cdot](268) at (30,2) {};\node[cdot](269) at (31,2) {};\node[cdot](270) at (32,2) {};
\node[cdot](271) at (33,3) {};\node[cdot](272) at (34,3) {};\node[cdot](273) at (35,3) {};\node[cdot](274) at (33,2) {};\node[cdot](275) at (34,2) {};\node[cdot](276) at (35,2) {};
\node[cdot](277) at (36,3) {};\node[cdot](278) at (37,3) {};\node[cdot](279) at (38,3) {};\node[cdot](280) at (36,2) {};\node[cdot](281) at (37,2) {};\node[cdot](282) at (38,2) {};
\node[cdot](283) at (39,3) {};\node[cdot](284) at (40,3) {};\node[cdot](285) at (41,3) {};\node[cdot](286) at (39,2) {};\node[cdot](287) at (40,2) {};\node[cdot](288) at (41,2) {};
\node[cdot](289) at (42,3) {};\node[cdot](290) at (43,3) {};\node[cdot](291) at (44,3) {};\node[cdot](292) at (42,2) {};\node[cdot](293) at (43,2) {};\node[cdot](294) at (44,2) {};
\node[cdot](295) at (45,3) {};\node[cdot](296) at (46,3) {};\node[cdot](297) at (47,3) {};\node[cdot](298) at (45,2) {};\node[cdot](299) at (46,2) {};\node[cdot](300) at (47,2) {};
\node[cdot](301) at (48,3) {};\node[cdot](302) at (49,3) {};\node[cdot](303) at (50,3) {};\node[cdot](304) at (48,2) {};\node[cdot](305) at (49,2) {};\node[cdot](306) at (50,2) {};

\draw (1)--(4)(2)--(5)(3)--(6);
\draw (7)--(10)(7)--(11)(8)--(11)(9)--(12);
\draw (13)--(16)(13)--(18)(14)--(17)(15)--(18);
\draw (19)--(22)(20)--(22)(20)--(23)(21)--(24);
\draw (25)--(28)(26)--(29)(26)--(30)(27)--(30);
\draw (31)--(34)(32)--(35)(33)--(34)(33)--(36);
\draw (37)--(40)(38)--(41)(39)--(41)(39)--(42);
\draw (43)--(46)(43)--(47)(43)--(48)(44)--(47)(45)--(48);
\draw (49)--(52)(49)--(53)(50)--(52)(50)--(53)(51)--(54);
\draw (55)--(58)(55)--(59)(56)--(59)(56)--(60)(57)--(60);
\draw (61)--(64)(61)--(65)(62)--(65)(63)--(64)(63)--(66);
\draw (67)--(70)(67)--(71)(68)--(71)(69)--(71)(69)--(72);
\draw (73)--(76)(73)--(78)(74)--(76)(74)--(77)(75)--(78);
\draw (79)--(82)(79)--(84)(80)--(83)(80)--(84)(81)--(84);
\draw (85)--(88)(85)--(90)(86)--(89)(87)--(88)(87)--(90);
\draw (91)--(94)(91)--(96)(92)--(95)(93)--(95)(93)--(96);
\draw (97)--(100)(98)--(100)(98)--(101)(98)--(102)(99)--(102);
\draw (103)--(106)(104)--(106)(104)--(107)(105)--(106)(105)--(108);
\draw (109)--(112)(110)--(112)(110)--(113)(111)--(113)(111)--(114);
\draw (115)--(118)(116)--(119)(116)--(120)(117)--(118)(117)--(120);
\draw (121)--(124)(122)--(125)(122)--(126)(123)--(125)(123)--(126);
\draw (127)--(130)(128)--(131)(129)--(130)(129)--(131)(129)--(132);
\draw (133)--(136)(133)--(137)(133)--(138)(134)--(137)(134)--(138)(135)--(138);
\draw (139)--(142)(139)--(143)(139)--(144)(140)--(143)(141)--(142)(141)--(144);
\draw (145)--(148)(145)--(149)(145)--(150)(146)--(149)(147)--(149)(147)--(150);
\draw (151)--(154)(151)--(155)(152)--(154)(152)--(155)(152)--(156)(153)--(156);
\draw (157)--(160)(157)--(161)(158)--(160)(158)--(161)(159)--(160)(159)--(162);
\draw (163)--(166)(163)--(167)(164)--(166)(164)--(167)(165)--(167)(165)--(168);
\draw (169)--(172)(169)--(173)(170)--(173)(170)--(174)(171)--(172)(171)--(174);
\draw (175)--(178)(175)--(179)(176)--(179)(176)--(180)(177)--(179)(177)--(180);
\draw (181)--(184)(181)--(185)(182)--(185)(183)--(184)(183)--(185)(183)--(186);
\draw (187)--(190)(187)--(192)(188)--(190)(188)--(191)(188)--(192)(189)--(192);
\draw (193)--(196)(193)--(198)(194)--(196)(194)--(197)(195)--(196)(195)--(198);
\draw (199)--(202)(199)--(204)(200)--(203)(200)--(204)(201)--(202)(201)--(204);
\draw (205)--(208)(205)--(210)(206)--(209)(206)--(210)(207)--(209)(207)--(210);
\draw (211)--(214)(212)--(214)(212)--(215)(212)--(216)(213)--(214)(213)--(216);
\draw (217)--(220)(218)--(220)(218)--(221)(218)--(222)(219)--(221)(219)--(222);
\draw (223)--(226)(224)--(226)(224)--(227)(225)--(226)(225)--(227)(225)--(228);
\draw (229)--(232)(229)--(233)(229)--(234)(230)--(232)(230)--(233)(230)--(234)(231)--(234);
\draw (235)--(238)(235)--(239)(235)--(240)(236)--(239)(236)--(240)(237)--(238)(237)--(240);
\draw (241)--(244)(241)--(245)(241)--(246)(242)--(245)(242)--(246)(243)--(245)(243)--(246);
\draw (247)--(250)(247)--(251)(247)--(252)(248)--(251)(249)--(250)(249)--(251)(249)--(252);
\draw (253)--(256)(253)--(257)(254)--(256)(254)--(257)(254)--(258)(255)--(256)(255)--(258);
\draw (259)--(262)(259)--(263)(260)--(262)(260)--(263)(260)--(264)(261)--(263)(261)--(264);
\draw (265)--(268)(265)--(269)(266)--(268)(266)--(269)(267)--(268)(267)--(269)(267)--(270);
\draw (271)--(274)(271)--(276)(272)--(274)(272)--(275)(272)--(276)(273)--(274)(273)--(276);
\draw (277)--(280)(278)--(280)(278)--(281)(278)--(282)(279)--(280)(279)--(281)(279)--(282);
\draw (283)--(286)(283)--(287)(283)--(288)(284)--(286)(284)--(287)(284)--(288)(285)--(286)(285)--(288);
\draw (289)--(292)(289)--(293)(289)--(294)(290)--(292)(290)--(293)(290)--(294)(291)--(293)(291)--(294);
\draw (295)--(298)(295)--(299)(296)--(298)(296)--(299)(296)--(300)(297)--(298)(297)--(299)(297)--(300);
\draw (301)--(304)(301)--(305)(301)--(306)(302)--(304)(302)--(305)(302)--(306)(303)--(304)(303)--(305)(303)--(306);
\end{tikzpicture}
\end{center}
\vspace{10pt}
Here, we also assumed that the collection of minimal elements is the unique antichain containing $4$ elements.
Similarly, there are $13$ configurations for the covers between rank $11$ and $12$, $4$ configurations for the covers in ranks $12$ to $19$, and only $1$ configuration for the ranks $19$ to $29$.

Let $d$ be the highest degree, for instance $d=29$ in type $H_4$.

\algo{EnumeratePosets2}{$\Gamma'$,$p$}{Enumerates all root posets starting from $\Gamma'$}
{A poset $\Gamma'$, an index $p$ for the next degree.}
{Posets based upon $\Gamma'$ satisfying Properties~\ref{prop:parabolicsubsystems} through~\ref{prop:antichainorbitsize}, except for the standard parabolic substructure.}
{
\item If $p=d$ then:
\begin{enumerate}
\item Count the antichains with $2$ or $3$ elements which do not contain a simple root. If this number wrong, return $0$.
\item Compute orbits under the Panyushev action. As soon as one of them has a forbidden length, return $0$.
\item Print $\Gamma$ to an output file, return $1$.
\end{enumerate}
\item If all elements of the last added degree are greater than all simple roots, then count the antichains with $2$ or $3$
elements which contain simple roots. These numbers will not change anymore, so we may return $0$ if they are wrong.
\item Count antichains and use Property~\ref{prop:antichainsize} as upper bounds, return $0$ if one of them is violated.
\item Compute orbits under the Panyushev action. As soon as one of them has a forbidden length, return $0$.
\item For all possible configurations of covers for the next degree:
\begin{enumerate}
\item Include the next level to $\Gamma'$ to a new poset $\Gamma''$,
\item Call {\bf EnumeratePosets2}($\Gamma'$,$p+1$).
\end{enumerate}
}

\begin{remar} One advantage of Algorithm~\ref{EnumeratePosets2} is that it is independent of the structure of $H_4$.
Indeed, we have tested it as well with the root posets of types $H_3$, $B_4$, and $F_4$ (and, of course, adjusted combinatorial information). It turns out that (up to symmetries) these posets are uniquely determined by these properties.
\end{remar}

\subsubsection{Results} A slight modification of Algorithm~\ref{EnumeratePosets2} produces all posets with Properties~\ref{prop:parabolicsubsystems} through~\ref{prop:antichainsize} and satisfying as well Property~\ref{prop:antichainorbitsize}(a). There are $4$ such posets (see \fref{fig:rp12345_2}), none of them satisfies Property~\ref{prop:antichainorbitsize}(b).
We do indeed think that the Panyushev action does simply not behave well in type $H_4$.

\subsection{Assuming as well Property~\ref{prop:antichainorderidealsize}}
\label{sec:H4algorithms2}

In Section \ref{sec:hilbertseries}, we discuss the computations that have led to a conjectured Hilbert series of the diagonal coinvariants which should possibly provide the numbers of order ideals of given sizes (Property~\ref{prop:antichainorderidealsize}). A slight modification of Algorithm~\ref{EnumeratePosets2} is capable of enumerating all posets with Properties~\ref{prop:parabolicsubsystems} through~\ref{prop:antichainsize} and Property~\ref{prop:antichainorderidealsize}. Notice that Property~\ref{prop:antichainorderidealsize} is such a strong condition that the computation just takes a few seconds this time.
It turns out that there is no poset agreeing with the polynomial $\hilbert_{H_4}(q,t)$ conjectured in Conjecture~\ref{conj:hilbertpol}.

\subsubsection{Assuming Conjecture~\ref{conj:hilbertpol} to be false}
  Even though we strongly believe the Hilbert series in Conjecture~\ref{conj:hilbertpol} to be true, one might ask how the situation looks like if this conjecture turns out to be false.
  In particular, if it could then be possible to find a poset satisfying Properties~\ref{prop:parabolicsubsystems} through~\ref{prop:antichainsize} and Property~\ref{prop:antichainorderidealsize}.
  To this end, assume now that there exists such a poset, and that the Hilbert series $\hilbert_{H_4}(q,t)$ satisfies assumptions~\eqref{item:assumption1} and~\eqref{item:assumption2}, and that $a_{i,n}\ne 0$ implies that $q^*[n]_{q^2}$ appears in the decomposition of $q^{60}\hilbert_{H_4}(q,q^{-1})$ (see Section~\ref{sec:furtherassumptions} for the definition of the two assumptions, of the decomposition of $q^{60}\hilbert_{H_4}(q,q^{-1})$, and of $a_{i,n}$).
  By the well behavedness with respect to standard parabolic subgroups in Property~\ref{prop:parabolicsubsystems}, the terms of low degree of $\hilbert_{H_4}(q,1)$ are given by
  \[ 1+4q+6q^2+7q^3+8q^4+8q^5+9q^6+8q^7+8q^8+8q^9+9q^{10}+\ldots, \]
  and the terms of high degree are given by
  \[ \ldots+2q^{49}+q^{50}+q^{51}+\ldots+q^{60}. \]
  Then one can compute that there are $180$
  possible polynomials for $\hilbert_{H_4}(q,t)$ with these properties, which moreover all have different specializations $t=1$.
  Finally, there are $120$ posets satisfying Properties~\ref{prop:parabolicsubsystems} through~\ref{prop:antichainsize} and whose order ideals yield one of these specializations. Only $7$ of the $180$ different possible polynomials $\hilbert_{H_4}(q,t)$ occur.
  The following list gives the $7$ polynomials and the number of corresponding posets.
\begin{align*}
2  \quad\times\quad& \phantom{q^0t^0}[61]_{q,t}+q^1t^1[49]_{q,t}+q^{3}t^{3}[41]_{q,t}+q^1t^1[37]_{q,t}+q^{4}t^{4}[31]_{q,t}+\\
  &q^{2}t^{2}[25]_{q,t}+q^1t^1[21]_{q,t}+q^{2}t^{2}[13]_{q,t}+q^{6}t^{6}\phantom{[\;1\;]_{q,t}}+q^{10}t^{10}\\[15pt]
10 \quad\times\quad& \phantom{q^0t^0}[61]_{q,t}+q^1t^1[49]_{q,t}+q^1t^1[41]_{q,t}+q^{3}t^{3}[37]_{q,t}+q^1t^1[31]_{q,t}+\\
  &q^{4}t^{4}[25]_{q,t}+q^{2}t^{2}[21]_{q,t}+q^{2}t^{2}[13]_{q,t}+q^{6}t^{6}\phantom{[\;1\;]_{q,t}}+q^{10}t^{10}\\[15pt]
12 \quad\times\quad& \phantom{q^0t^0}[61]_{q,t}+q^1t^1[49]_{q,t}+q^1t^1[41]_{q,t}+q^{4}t^{4}[37]_{q,t}+q^1t^1[31]_{q,t}+\\
  &q^{3}t^{3}[25]_{q,t}+q^{2}t^{2}[21]_{q,t}+q^{2}t^{2}[13]_{q,t}+q^{6}t^{6}\phantom{[\;1\;]_{q,t}}+q^{10}t^{10}\\[15pt]
16 \quad\times\quad& \phantom{q^0t^0}[61]_{q,t}+q^1t^1[49]_{q,t}+q^1t^1[41]_{q,t}+q^{4}t^{4}[37]_{q,t}+q^1t^1[31]_{q,t}+\\
  &q^{2}t^{2}[25]_{q,t}+q^{2}t^{2}[21]_{q,t}+q^{3}t^{3}[13]_{q,t}+q^{6}t^{6}\phantom{[\;1\;]_{q,t}}+q^{10}t^{10}\\[15pt]
20 \quad\times\quad& \phantom{q^0t^0}[61]_{q,t}+q^1t^1[49]_{q,t}+q^{3}t^{3}[41]_{q,t}+q^1t^1[37]_{q,t}+q^1t^1[31]_{q,t}+\\
  &q^{4}t^{4}[25]_{q,t}+q^{2}t^{2}[21]_{q,t}+q^{2}t^{2}[13]_{q,t}+q^{6}t^{6}\phantom{[\;1\;]_{q,t}}+q^{10}t^{10}\\[15pt]
20 \quad\times\quad& \phantom{q^0t^0}[61]_{q,t}+q^1t^1[49]_{q,t}+q^1t^1[41]_{q,t}+q^{3}t^{3}[37]_{q,t}+q^1t^1[31]_{q,t}+\\
  &q^{2}t^{2}[25]_{q,t}+q^{2}t^{2}[21]_{q,t}+q^{4}t^{4}[13]_{q,t}+q^{6}t^{6}\phantom{[\;1\;]_{q,t}}+q^{10}t^{10}\\[15pt]
40 \quad\times\quad& \phantom{q^0t^0}[61]_{q,t}+q^1t^1[49]_{q,t}+q^{3}t^{3}[41]_{q,t}+q^1t^1[37]_{q,t}+q^1t^1[31]_{q,t}+\\
  &q^{2}t^{2}[25]_{q,t}+q^{2}t^{2}[21]_{q,t}+q^{4}t^{4}[13]_{q,t}+q^{6}t^{6}\phantom{[\;1\;]_{q,t}}+q^{10}t^{10}
\end{align*}

\section{Computation of the Hilbert series of the diagonal coinvariants}
\label{sec:hilbertseries}

To compute the Hilbert series $\hilbert_{H_4}(q,t) = \hilbert\big( \coinv(H_4);q,t \big)$ in Property~\ref{prop:antichainorderidealsize}, we must compute a minimal basis~$B$
of the ideal in $\CC[V\oplus V]$ generated by the elements
\[ \theta(m) := \sum_{w\in W} \det(w)\: w(m) \]
for all monomial $m = x_1^{\alpha_1} \cdots x_4^{\alpha_4} y_1^{\beta_1} \cdots y_4^{\beta_4} \in \CC[\x,\y]$. Here, $W = W(H_4)$ is the Coxeter group of type $H_4$ acting diagonally on $\CC[\x,\y]$.
Each element $f\in B$ is homogeneous in $\x$ and in $\y$ of bidegree $(\degx,\degy)$, and the Hilbert series is then given by
\[ \hilbert_{H_4}(q,t) = \sum_{f\in B} q^{\degx} t^{\degy}. \]

In this process, we face the following two problems.
First, there is the problem of computing $\theta(m)$ for a given monomial $m \in \CC[\x,\y]$. This computation is closely related to computing the Reynolds operator in classical computational invariant theory (over polynomial rings) since $W$ has a subgroup of index $2$ in which all elements have determinant $1$. However, computing invariants for a group of size $7200$ acting on an $8$-dimensional space is slightly out of the range of the classical algorithms.
Second, there is the problem that computations in fields of coefficients that can be used to provide faithful $4$-dimensional turn out to be too slow for the needed Gröbner basis computations.

\medskip

To work around these problems, we are forced to resort to some heuristics.
We present three steps which accelerate the computation considerably, two of them are assumptions under which we cannot prove anymore that the computed series is indeed correct.

\subsection{Finding a nice representation}
The largest abelian subgroup $U$ in $W$ has $50$ elements. We choose a representation of $W$ in which these $50$ elements are diagonal matrices. Luckily, for this choice of a basis, $W$ has a subgroup $U'$ of $400$ elements which are all monomial matrices.
Thus we compute $\theta(m)$ in the following way.

\algo{Theta}{$m$}{Apply the map $\theta$}
{A monomial $m\in \CC[x_1,\ldots,x_4,y_1,\ldots,y_4]$.}
{$\theta(m)$.}
{\label{algo:theta}
\item Compute $z:=\sum_{w\in U} \det(w)\: w(m)$. This is very easy, since all elements of $U$ are diagonal matrices.
\item If $z\ne 0$, then:
\begin{enumerate}
  \item Compute $z':=\sum_{wU\in U'/U} \det(w)\: w(m)$. Computing each of the summands is just evaluation, since all elements of $U'$ are monomial matrices.
  \item If $z'\ne 0$, then return $g:=\sum_{wU'\in W/U'} \det(w)\: w(z')$.
\end{enumerate}
\item Return $0$.
}

\begin{remar}
  The first sum has $50$ summands, the second one $8$, and the last sum has $36$ summands. Thus instead of computing $14400$ times $w(m)$, we only need $36$ of these expensive computations, and we only perform them if $z$ and $z'$ are not zero.
  In practice, before returning $g$ we also evaluate $g$ at $(y_1,\ldots,y_4,x_1,\ldots,x_4)$ to avoid a second computation of this invariant.
\end{remar}

\subsection{Choosing a fast field of coefficients}
In principle, faithful $4$-dimensional representations of $W$ require a field containing the golden ratio, for instance $\QQ(\sqrt{5})$.
To apply Algorithm~\ref{algo:theta}, we need fifth roots of unity because we need to diagonalize simultaneously the subgroup $U$.
This is not a problem since implementations of cyclotomic fields are highly optimized in most computer algebra systems.
However, although they are quite fast, these fields (in characteristic zero) are too slow for the last step, the computation of the Gr\"obner basis.
We therefore perform all computations over a finite field $\FF_q$ of order $q$ with $5\mid (q-1)$ (so that the $5$-th roots of unity are included). This can result in a slightly different Hilbert series, but with large $q$ this is very unlikely.

\subsection{Reducing the number of variables}
Finally, since we are only interested in the degrees of the components of the elements of $B$, we may evaluate $y_1,\ldots,y_4$ at
$z_1t,\ldots,z_4t$ for some $z_1,\ldots,z_4\in\FF_q$ \emph{before} computing $B$. This could lead to some wrong coefficients in
$\hilbert_{H_4}(q,t)$. But without this simplification, the computation of $B$ is not feasible on modern computers.

\subsection{Further assumptions on \texorpdfstring{$\hilbert\big( \coinv(\rootSystem); q, t \big)$}{the Hilbert series}}
\label{sec:furtherassumptions}
Even though we already have made concessions, the above techniques are not quite sufficient to obtain all the required coefficients.
We finally fill the gaps in the computed series by using the following further assumptions.
\begin{enumerate}
  \item The series $\hilbert\big( \coinv(\rootSystem); q, t \big)$ is of the form
  \[ \sum_{i\in\NN, n\in\NN} a_{i,n} q^i t^i [n]_{q,t} \]
  for some $a_{i,n}\in\NN$, $i,n\in\NN$. \label{item:assumption1}

  \item The evaluation $t=q^{-1}$ in the Hilbert series yields\footnote{This assumption is based on~\cite[Conjecture~3]{Stu2010}.}
  \[  q^{60}\hilbert_{H_4}(q,q^{-1})
  = \frac{[32]_q[42]_q[50]_q[60]_q}{[2]_q[12]_q[20]_q[30]_q}, \]
  \label{item:assumption2}
  where we write $[n]_q = [n]_{q,1} = q^{n-1} + q^{n-2} + \ldots + 2 + 1$.
\end{enumerate}
Under these assumptions, the above heuristics, and with extensive computations we obtain the following conjecture.

\begin{conje}\label{conj:hilbertpol}
The Hilbert series of the diagonal coinvariants of the Coxeter group of type $H_4$ is given by
\begin{align*}
  \hilbert_{H_4}(q,t) =\:\:& [61]_{q,t} + qt[49]_{q,t} + qt[41]_{q,t} + q^2t^2[37]_{q,t} + qt[31]_{q,t} +\\
                           & q^3t^3[25]_{q,t} q^2t^2[21]_{q,t} + q^4t^4[13]_{q,t} + q^6t^6 + q^{10}t^{10}.
\end{align*}
\end{conje}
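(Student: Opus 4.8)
The plan is to obtain the stated polynomial by the computational scheme laid out in Section~\ref{sec:hilbertseries}: compute a minimal bihomogeneous generating set $B$ of the ideal $\mathcal{J}\subseteq\CC[\x,\y]$ of $W$-alternants, read off $\hilbert_{H_4}(q,t)=\sum_{f\in B}q^{\degx}t^{\degy}$, and then reconcile the (necessarily incomplete) output with the structural constraints~\eqref{item:assumption1} and~\eqref{item:assumption2}. Since $\mathcal{J}$ is generated by the alternants $\theta(m)$ and is finitely generated, only monomials $m$ of bounded total degree are needed, so the task splits into (i) a practical way to evaluate $\theta$ and (ii) a Gr\"obner basis computation over a usable field of coefficients.

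For (i), I would fix an explicit $8$-dimensional representation of $W=W(H_4)$ in which the maximal abelian subgroup $U$ (of order $50$) is diagonal --- which forces coefficients containing a primitive fifth root of unity --- and verify that in the same basis there is an intermediate subgroup $U'$ of order $400$ acting by monomial matrices. Algorithm~\ref{algo:theta} then computes $\theta(m)$ as three nested sums: over $U$ (instantaneous, diagonal), over the $8$ cosets in $U'/U$ (cheap, monomial), and over the $36$ cosets in $W/U'$ (the only expensive evaluations), short-circuiting to $0$ as soon as an intermediate sum vanishes; this replaces $14400$ matrix actions by at most $36$ in the rare non-vanishing case. Running this over the relevant monomials yields generators of $\mathcal{J}$, and interreducing a Gr\"obner basis of $\mathcal{J}$ extracts a minimal generating set $B$ together with its list of bidegrees.

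For (ii), to keep the Gr\"obner computation within reach I would work over a finite field $\FF_q$ with $5\mid(q-1)$ instead of $\QQ(\sqrt{5},\zeta_5)$ --- repeating for several primes $q$ guards against the unlikely event of a bad prime --- and substitute $y_i\mapsto z_it$ for generic $z_i\in\FF_q$, so that the $\y$-grading is carried by the single parameter $t$. Even after these simplifications some coefficients remain undetermined, and I would pin them down by imposing the assumed shape $\hilbert_{H_4}(q,t)=\sum a_{i,n}\,q^it^i[n]_{q,t}$ with $a_{i,n}\in\NN$ together with the diagonal evaluation $q^{60}\hilbert_{H_4}(q,q^{-1})=\frac{[32]_q[42]_q[50]_q[60]_q}{[2]_q[12]_q[20]_q[30]_q}$ of~\eqref{item:assumption2}; solving the resulting linear system over $\NN$, constrained by the coefficients already recovered from the computation, singles out the displayed polynomial. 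Consistency is then checked against the cases that \emph{can} be computed exactly (the parabolic subgroups of types $A_3$ and $H_3$, which fix the low-degree terms $1+4q+6q^2+\cdots$ via Property~\ref{prop:parabolicsubsystems}), against the forced high-degree terms $\cdots+q^{50}+q^{51}+\cdots+q^{60}$, and against the specialization $q=t=1$, which must return $\Cat(H_4)=280$.

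The main obstacle is the Gr\"obner basis step itself: over any field large enough to carry a faithful $4$-dimensional representation of $W(H_4)$, and even after reducing to $\FF_q$ and eliminating the $\y$-variables, the computation sits at --- and partly beyond --- the edge of feasibility, which is exactly why assumptions~\eqref{item:assumption1} and~\eqref{item:assumption2} are invoked and why the result is stated as Conjecture~\ref{conj:hilbertpol} rather than a theorem. A genuine proof would require either a substantially more efficient invariant-theoretic computation of $\coinv(H_4)$ or an independent representation-theoretic determination of its bigraded Hilbert series, neither of which is available here.
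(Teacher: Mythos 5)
Your proposal follows essentially the same route as the paper: evaluating $\theta(m)$ via the chain $U\le U'\le W$ of orders $50$, $400$, $14400$ in a basis diagonalizing $U$, passing to a finite field $\FF_q$ with $5\mid(q-1)$, substituting $y_i\mapsto z_it$ before the Gr\"obner computation, and filling the remaining gaps with assumptions~\eqref{item:assumption1} and~\eqref{item:assumption2}. You also correctly identify why the result is only a conjecture rather than a theorem, so there is nothing of substance to add beyond minor embellishments (checking several primes, the $q=t=1$ sanity check against $\Cat(H_4)=280$).
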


Notice that the polynomial $q^{60}\hilbert_{H_4}(q,q^{-1})$ in assumption~\eqref{item:assumption2} has the decomposition
\begin{align*}
  q^{60}\hilbert_{H_4}(q,q^{-1}) =\:\:& [61]_{q^2} + q^{12}[49]_{q^2}+q^{20}[41]_{q^2}+q^{24}[37]_{q^2}+q^{30}[31]_{q^2}+ \\
                                      & q^{36}[25]_{q^2}+q^{40}[21]_{q^2}+q^{48}[13]_{q^2}+2q^{60},
\end{align*}
and that this decomposition in such summands is unique in the sense that consecutive summands $q^a[b]_{q^2}$ and $q^c[d]_{q^2}$ satisfy $a<c$ and $b>d$. One can now observe that the sequence
$$61,49,41,37,31,25,21,13,1,1$$
of numbers involved in this decomposition coincides with the sequence of numbers involved in the conjectured Hilbert series.

\end{document}